\numberwithin{equation}{section}
\newtheorem{theorem}[equation]{Theorem} 
\newtheorem*{theorem*}{Theorem}
\newtheorem{lemma}[equation]{Lemma}
\newtheorem{proposition}[equation]{Proposition}
\newtheorem{corollary}[equation]{Corollary}
\newtheorem*{corollary*}{Corollary}
\theoremstyle{remark}
\theoremstyle{remark}
\newtheorem{remark}[equation]{Remark}
\newcommand{\cA}{{\mathcal A}}
\newcommand{\cB}{{\mathcal B}}
\newcommand{\cC}{{\mathcal C}}
\newcommand{\cN}{{\mathcal N}}
\newcommand{\cO}{{\mathcal O}}
\newcommand{\cS}{{\mathcal S}}
\newcommand{\bbC}{\mathbb{C}}
\newcommand{\bbQ}{\mathbb{Q}}
\newcommand{\bbZ}{\mathbb{Z}}
\DeclareMathOperator{\SmProj}{SmProj} 
\DeclareMathOperator{\id}{id}
\DeclareMathOperator{\NChow}{NChow} 
\DeclareMathOperator{\NNum}{NNum} 
\DeclareMathOperator{\Chow}{Chow} 
\DeclareMathOperator{\Num}{Num} 
\newcommand{\Ab}{\mathrm{Ab}}
\newcommand{\NHomo}{\mathrm{NHomo}}
\newcommand{\Curv}{\mathrm{Curv}}
\newcommand{\perf}{\mathsf{perf}}
\newcommand{\Hom}{\mathsf{Hom}}
\newcommand{\Homo}{\mathrm{Homo}}
\newcommand{\End}{\mathsf{End}}
\newcommand{\op}{\mathsf{op}}
\newcommand{\too}{\longrightarrow}
\newcommand{\ie}{\textsl{i.e.}\ }
\newcommand{\eg}{\textsl{e.g.}}
\begin{document}

\title[Jacobians of noncommutative motives]{Jacobians of noncommutative motives}

\author{matilde Marcolli and Gon{\c c}alo~Tabuada}

\address{Matilde Marcolli, Mathematics Department, Mail Code 253-37, Caltech, 1200 E.~California Blvd. Pasadena, CA 91125, USA}
\email{matilde@caltech.edu} 
\urladdr{http://www.its.caltech.edu/~matilde}

\address{Gon{\c c}alo Tabuada, Department of Mathematics, MIT, Cambridge, MA 02139, USA}
\email{tabuada@math.mit.edu}
\urladdr{http://math.mit.edu/~tabuada}

\thanks{The first named author was partially supported by the NSF grants DMS-0901221, DMS-1007207, DMS-1201512 and PHY-1205440. The second named author was partially supported by the NEC Award-2742738.}
\subjclass[2000]{14C15, 14H40, 14K02, 14K30, 18D20.}
\date{\today}

\keywords{Jacobians, abelian varieties, isogeny, noncommutative motives}

\abstract{In this article one extends the classical theory of (intermediate) Jacobians to the ``noncommutative world''. Concretely, one constructs a $\bbQ$-linear additive Jacobian functor $N \mapsto {\bf J}(N)$ from the category of noncommutative Chow motives to the category of abelian varieties up to isogeny, with the following properties: (i) the first de Rham cohomology group of ${\bf J}(N)$ agrees with the subspace of the odd periodic cyclic homology of $N$ which is generated by algebraic curves; (ii) the abelian variety ${\bf J}(\perf(X))$ (associated to the derived dg category $\perf(X)$ of a smooth projective $k$-scheme $X$) identifies with the union of all the intermediate algebraic Jacobians of $X$.
}}
\maketitle 
\vskip-\baselineskip
\vskip-\baselineskip

\section{Introduction}\label{sec:introduction}
\subsection*{Jacobians} 
The {\em Jacobian} $J(C)$ of a curve $C$ was introduced by Weil \cite{Weil} in the forties as a geometric replacement for the first cohomology group $H^1(C)$ of $C$. This construction was latter generalized to the {\em Picard} $\mathrm{Pic}^0(X)$ and the {\em Albanese} $\mathrm{Alb}(X)$ varieties of a smooth projective $k$-scheme of dimension $d$. When $X=C$ one has $\mathrm{Pic}^0(C)=\mathrm{Alb}(C)=J(C)$, but in general $\mathrm{Pic}^0(X)$ (resp. $\mathrm{Alb}(X)$) is a geometric replacement for $H^1(X)$ (resp. for $H^{2d-1}(X)$).

In the case where $k$ is an algebraically closed subfield of $\bbC$, Griffiths \cite{Griffiths} extended these constructions to a whole family of Jacobians. Concretely, the $i^{\mathrm{th}}$ Jacobian $J_i(X)$ of $X$ is the compact torus
\begin{eqnarray*}
J_i(X):= \frac{H_B^{2i+1}(X,\bbC)}{F^{i+1}H_B^{2i+1}(X,\bbC) + H_B^{2i+1}(X,\bbZ)} && 0 \leq i \leq d-1\,,
\end{eqnarray*}
where $H_B$ stands for Betti cohomology and $F$ for the Hodge filtration. In contrast with $J_0(X) = \mathrm{Pic}^0(X)$ and $J_{d-1}(X)= \mathrm{Alb}(X)$, the intermediate Jacobians are {\em not} algebraic. Nevertheless, they contain an algebraic variety $J_i^a(X)\subseteq J_i(X)$ defined by the image of the Abel-Jacobi map
\begin{eqnarray}\label{eq:AbelJacobi}
AJ_i: CH^{i+1}(X)_\bbZ^{\mathrm{alg}} \to J_i(X) && 0 \leq i \leq d-1\,,
\end{eqnarray}
where $CH^{i+1}(X)_\bbZ^{\mathrm{alg}}$ stands for the group of algebraically trivial cycles of codimension $i+1$; consult Vial \cite[page~12]{Vial} for further details. When $i=0, d-1$ the map \eqref{eq:AbelJacobi} is surjective and so $J_0^a(X)=\mathrm{Pic}^0(X)$ and $J_{d-1}^a(X)=\mathrm{Alb}(X)$.

Note that all these abelian varieties are only well-defined up to isogeny. In what follows one will write $\Ab(k)_\bbQ$ for the category of abelian varieties up to isogeny; consult Deligne~\cite[page~4]{Deligne} for further details.
\subsection*{Motivating question}
All the above classical constructions in the ``commutative world'' lead us naturally to the following motivating question:

\vspace{0.2cm}

{\it Question: Can the theory of (intermediate) Jacobians be extended to the ``noncommutative world''~?}
\subsection*{Statement of results}
Let $k$ be a field of characteristic zero. Recall from \S\ref{sub:NCmotives} the construction of the category $\NChow(k)_\bbQ$ of noncommutative Chow motives (with rational coefficients). Examples of noncommutative Chow motives include finite dimensional $k$-algebras of finite global dimension (\eg\ path algebras of finite quivers without oriented loops) as well as derived dg categories of perfect complexes $\perf(X)$ of smooth projective $k$-schemes\footnote{Or more generally smooth and proper Deligne-Mumford stacks.} $X$; consult also Kontsevich \cite{IAS} for examples coming from deformation quantization.

As proved in \cite[Thm.~7.2]{Galois}, periodic cyclic homology gives rise to a well-defined $\otimes$-functor with values in the category of finite dimensional super $k$-vector spaces
\begin{equation}\label{eq:HP}
HP^\pm: \NChow(k)_\bbQ \too \mathrm{sVect}(k)\,.
\end{equation}
Given a noncommutative Chow motive $N$, let us denote by $HP_{\mathrm{curv}}^-(N)$ the piece of $HP^-(N)$ which is {\em generated by curves}, \ie the $k$-vector space
$$HP^-_{\mathrm{curv}}(N):=\sum_{C,\Gamma} \mathrm{Im} (HP^-(\perf(C)) \stackrel{HP^-(\Gamma)}{\too} HP^-(N))\,,$$
where $C$ is smooth projective curve and $\Gamma:\perf(C) \to N$ a morphism in $\NChow(k)_\bbQ$.

Inspired by Grothendieck's standard conjecture $D$ (see \cite[\S5.4.1]{Andre}), the authors have introduced at \cite[page~4]{Galois} the noncommutative standard conjecture $D_{NC}$. Given a noncommutative Chow motive $N$, $D_{NC}(N)$ claims that the homological and the numerical equivalence relations on the rationalized Grothendieck group $K_0(N)_\bbQ$ agree.  Our first main result is the following: 
\begin{theorem}\label{thm:main}
\begin{itemize}
\item[(i)] There is a well-defined $\bbQ$-linear additive Jacobian functor
\begin{eqnarray}\label{eq:J-functor}
\NChow(k)_\bbQ \too \Ab(k)_\bbQ && N \mapsto {\bf J}(N)\,.
\end{eqnarray}
\item[(ii)] For every $N \in \NChow(k)_\bbQ$, there exists a smooth projective curve $C_N$ and a morphism $\Gamma_N: \perf(C_N) \to N$ such that $H^1_{dR}({\bf J}(N))=\mathrm{Im}HP^-(\Gamma_N)$, where $H_{dR}$ stands for de Rham cohomology. Consequently, one has an inclusion of $k$-vector spaces $H^1_{dR}({\bf J}(N)) \subseteq HP^-_{\mathrm{curv}}(N)$.
\item[(iii)] Given a noncommutative Chow motive $N$, assume that the noncommutative standard conjecture $D_{NC}(\perf(C) \otimes N)$ holds for every smooth projective curve $C$. Under such assumption the inclusion of item (ii) becomes an equality
\begin{equation}\label{eq:H1}
H^1_{dR}({\bf J}(N)) = HP_{\mathrm{curv}}^-(N)\,.
\end{equation}
\end{itemize}
\end{theorem}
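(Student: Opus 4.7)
The plan is to construct $\mathbf{J}$ in two stages: first on the full additive subcategory of $\NChow(k)_\bbQ$ generated by the motives $\perf(C)$ of smooth projective curves, and then on an arbitrary noncommutative Chow motive $N$ by exploiting the finite dimensionality of $HP^-(N)$ together with the semi-simplicity of $\Ab(k)_\bbQ$.

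On curves, the orbit-category description of $\NChow(k)_\bbQ$ yields a decomposition
\[
\Hom_{\NChow(k)_\bbQ}(\perf(C_1),\perf(C_2)) \;\cong\; \bigoplus_{i=0}^{2} CH^i(C_1\times C_2)_\bbQ,
\]
whose middle summand is the classical group of divisor correspondences acting on Jacobians. I would set $\mathbf{J}(\perf(C)) := J(C)$ and send a morphism to the Weil-style action of its $CH^1$-component on Jacobians; compatibility of cycle composition with Jacobian correspondences then makes this a $\bbQ$-linear additive functor on the curve subcategory.

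For general $N$, the finite dimensionality of $HP^-(N)$ \cite[Thm.~7.2]{Galois} allows one to pick finitely many morphisms $\Gamma_i:\perf(C_i)\to N$ whose $HP^-$-images jointly span $HP^-_{\mathrm{curv}}(N)$. Setting $C_N:=\coprod_iC_i$ (a smooth projective curve) and $\Gamma_N:=\bigoplus_i\Gamma_i:\perf(C_N)\to N$, I would define $\mathbf{J}(N)$ as the quotient $J(C_N)/K_N$, where $K_N\subseteq J(C_N)$ is the sub-abelian-variety generated by the images of all morphisms $J(D)\to J(C_N)$ coming from $\perf(D)\to\perf(C_N)$ whose post-composite with $\Gamma_N$ vanishes. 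Semi-simplicity of $\Ab(k)_\bbQ$ together with the bound $\dim K_N \leq \dim J(C_N)$ ensures that $K_N$ is well defined, and a naturality check (enlarging $C_N$ as necessary for morphisms $N\to N'$) yields the functor of~(i). Part~(ii) is then immediate: the universal surjection $J(C_N)\twoheadrightarrow\mathbf{J}(N)$ realises $H^1_{dR}(\mathbf{J}(N))=\mathrm{Im}(HP^-(\Gamma_N))$, which tautologically lies in $HP^-_{\mathrm{curv}}(N)$.

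For (iii), the reverse inclusion amounts to showing that any additional $\Gamma':\perf(C')\to N$ contributes no $HP^-$-class beyond $\mathrm{Im}(HP^-(\Gamma_N))$. Under the identification of $\Hom_{\NChow}(\perf(C'),N)$ with (a quotient of) $K_0(\perf(C')\otimes N)_\bbQ$, the hypothesis $D_{NC}(\perf(C')\otimes N)$ collapses the homological and numerical equivalence relations on this $K_0$, permitting us to replace $\Gamma'$ by a morphism that genuinely factors through $\Gamma_N$ modulo $K_N$, and yielding the claimed equality. The main obstacle I foresee is the extension step: making $\mathbf{J}(N)$ a well-defined functor that is independent of the chosen $(C_i,\Gamma_i)$ and compatible with morphisms $N\to N'$ hinges essentially on the combination of semi-simplicity of $\Ab(k)_\bbQ$ with the finite dimensionality of $HP^-(-)$. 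The invocation of $D_{NC}$ in~(iii) is the second delicate point, and is precisely the reason that the equality there must be stated conditionally.
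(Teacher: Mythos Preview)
Your approach diverges substantially from the paper's, and the divergence is at exactly the point you flag as ``the main obstacle'': well-definedness and functoriality of ${\bf J}$. The paper does not attempt a direct quotient construction at all. Instead it passes to the category $\NNum(k)_\bbQ$ of noncommutative \emph{numerical} motives, which is abelian semi-simple by \cite[Thm.~1.9(ii)]{Numerical}. Via the orbit-category bridge of Theorem~\ref{thm:bridge} it shows that $\Ab(k)_\bbQ$ embeds fully faithfully as a subcategory $\overline{\Ab}(k)_\bbQ\subset\NNum(k)_\bbQ$, and then defines a truncation $\tau(N)$ by summing those simple factors of $N$ that lie in $\overline{\Ab}(k)_\bbQ$. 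Functoriality and independence of choices are then automatic from semi-simplicity, and ${\bf J}(N)$ is the preimage of $\tau(N)$ under the equivalence $\Ab(k)_\bbQ\simeq\overline{\Ab}(k)_\bbQ$. Your construction never invokes this semi-simplicity, and without it I do not see how to carry out the ``naturality check'' you allude to: enlarging $C_N$ along a morphism $N\to N'$ does not by itself show that the two candidate quotients agree up to isogeny.

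There is also an internal tension in your outline. You choose $\Gamma_N$ so that $\mathrm{Im}\,HP^-(\Gamma_N)=HP^-_{\mathrm{curv}}(N)$ \emph{by construction}, and then assert in~(ii) that $H^1_{dR}({\bf J}(N))=\mathrm{Im}\,HP^-(\Gamma_N)$. If both held, the equality of~(iii) would be unconditional and your invocation of $D_{NC}$ there would be vacuous. In the paper's setup the two sides are genuinely different: $H^1_{dR}({\bf J}(N))=HP^-(\tau(N))$ is governed by a decomposition in $\NNum(k)_\bbQ$, while $HP^-_{\mathrm{curv}}(N)$ is computed in $\NHomo(k)_\bbQ$, and $D_{NC}$ is precisely what lifts the factorization $\underline{\pi}^1\perf(C)\to\tau(N)\hookrightarrow N$ from $\NNum(k)_\bbQ$ to $\NHomo(k)_\bbQ$ so that $HP^-$ can be applied to it. Your definition of $K_N$ via vanishing of composites in $\NChow(k)_\bbQ$ sits at yet a third level and does not obviously match either: in particular there is no reason why every element of $\ker HP^-(\Gamma_N)$ should arise from a morphism $\perf(D)\to\perf(C_N)$ killed by $\Gamma_N$ already in $\NChow(k)_\bbQ$, so the claimed identification $H^1_{dR}(J(C_N)/K_N)=\mathrm{Im}\,HP^-(\Gamma_N)$ is unjustified.
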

Since the dimension of any abelian variety $A$ is equal to the dimension of $H^1_{dR}(A)$, one concludes from item (ii) that the dimension of ${\bf J}(N)$ is always bounded by the dimension of the $k$-vector space $HP^-_{\mathrm{curv}}(N)$. As proved in \cite[Thm.~1.5]{Galois}, the implication $D(X) \Rightarrow D_{NC}(\perf(X))$ holds for every smooth projective $k$-scheme $X$. Moreover, $\perf(C) \otimes \perf(X) \simeq \perf(C\times X)$; see \cite[Prop.~6.2]{regular}. Hence, when $N=\perf(X)$, the assumption of item (iii) follows from Grothendieck standard conjecture $D(C \times X)$ which is known to be true when $X$ is of dimension $\leq 4$; see \cite[\S5.4.1.4]{Andre}. Note also that equality \eqref{eq:H1} describes all the de Rham cohomology of ${\bf J}(N)$ since for every abelian variety one has $H^i_{dR}({\bf J}(N)) \simeq \wedge^iH^1_{dR}({\bf J}(N))$; see \cite[\S4.3.3]{Andre}. Intuitively speaking, the abelian variety ${\bf J}(N)$ is a geometric replacement for the $k$-vector space $HP^-_{\mathrm{curv}}(N)$.

Now, recall from \S\ref{sub:motives} that one has a classical contravariant $\otimes$-functor $M(-)$ from the category $\SmProj(k)$ of smooth projective $k$-schemes to the category $\Chow(k)_\bbQ$ of Chow motives (with rational coefficients). As explained in \cite[Prop.~4.2.5.1]{Andre}, de Rham cohomology factors through $\Chow(k)_\bbQ$. Hence, given a smooth projective $k$-scheme $X$ of dimension $d$, one can proceed as above and define 
\begin{eqnarray*}
NH_{dR}^{2i+1}(X):= \sum_{C,\gamma_i} \mathrm{Im} \big(H^1_{dR}(C) \stackrel{H^1_{dR}(\gamma_i)}{\too} H^{2i+1}_{dR}(X) \big) && 0 \leq i \leq d-1\,,
\end{eqnarray*}
where now $\gamma_i: M(C) \to M(X)(i)$ is a morphism in $\Chow(k)_\bbQ$. By restricting the intersection bilinear pairings on de Rham cohomology (see \cite[\S3.3]{Andre}) to these pieces one obtains
\begin{eqnarray}\label{eq:pairings1}
\langle-,- \rangle : NH_{dR}^{2d-2i-1}(X) \times NH_{dR}^{2i+1}(X) \too k && 0 \leq i \leq d-1\,.
\end{eqnarray}
Our second main result is the following:
\begin{theorem}\label{thm:main2}
Let $k$ be an algebraically closed subfield of $\bbC$ and $X$ be a smooth projective $k$-scheme of dimension $d$. Assume that the pairings \eqref{eq:pairings1} are non-degenerate. Under such assumption there is an isomorphism of abelian varieties up to isogeny
\begin{equation}\label{eq:isom-last}
 {\bf J}(\perf(X)) \simeq \cup_{i=0}^{d-1} J^a_i(X)\,.
\end{equation}
Moreover, $H^1_{dR}({\bf J}(\perf(X)))\otimes_k \bbC \simeq \oplus_{i=0}^{d-1}NH_{dR}^{2i+1}(X) \otimes_k \bbC$.
\end{theorem}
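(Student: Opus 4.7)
The plan is to compute $H^1_{dR}({\bf J}(\perf(X)))$ via Theorem~\ref{thm:main} and match it with the classical description of $\cup_{i=0}^{d-1} J_i^a(X)$. The strategy rests on two bridges: the Feigin-Tsygan-Weibel identification
\[
HP^-(\perf(X)) \simeq \bigoplus_{n\geq 0} H^{2n+1}_{dR}(X),
\]
and the orbit-functor decomposition (relating $\NChow(k)_\bbQ$ to $\Chow(k)_\bbQ$ via trivialization of Tate twists)
\[
\Hom_{\NChow(k)_\bbQ}(\perf(C), \perf(X)) \simeq \bigoplus_{i \in \bbZ} \Hom_{\Chow(k)_\bbQ}(h(C), h(X)(i)).
\]

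First I would combine these two ingredients to compute $HP^-_{\mathrm{curv}}(\perf(X))$ explicitly: any noncommutative correspondence $\Gamma:\perf(C) \to \perf(X)$ decomposes as $\oplus_i \gamma_i$ with $\gamma_i:h(C) \to h(X)(i)$, and under Feigin-Tsygan-Weibel the restriction of $HP^-(\Gamma)$ to $H^1_{dR}(C)$ coincides with the sum of the classical maps $H^1_{dR}(\gamma_i): H^1_{dR}(C) \to H^{2i+1}_{dR}(X)$, which vanish outside $i=0,\ldots,d-1$. Summing over all $C$ and $\Gamma$ yields
\[
HP^-_{\mathrm{curv}}(\perf(X)) \otimes_k \bbC \simeq \bigoplus_{i=0}^{d-1} NH^{2i+1}_{dR}(X) \otimes_k \bbC.
\]

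Next I would use the non-degeneracy of the pairings \eqref{eq:pairings1} to promote the inclusion of Theorem~\ref{thm:main}(ii) to an equality. Via Feigin-Tsygan-Weibel and the Chern character, the noncommutative intersection pairing on the curve-generated piece of $HP^-(\perf(C\times X))$ corresponds to \eqref{eq:pairings1}, so its non-degeneracy forces homological and numerical equivalence to agree on the relevant sub-quotient of $K_0(\perf(C \times X))_\bbQ$---this is precisely the hypothesis of Theorem~\ref{thm:main}(iii) applied to $N=\perf(X)$, and it gives the ``moreover'' assertion. To upgrade this cohomological match to the isogeny \eqref{eq:isom-last}, I would observe that $\cup_{i=0}^{d-1} J_i^a(X)$ is generated (up to isogeny) by the Abel-Jacobi maps $J(C) \to J_i^a(X)$ arising from the classical correspondences $\gamma_i$, while ${\bf J}(\perf(X))$ is generated by the maps $J(C) \to {\bf J}(\perf(X))$ coming from the noncommutative correspondences $\Gamma$; the decomposition $\Gamma \mapsto \oplus_i \gamma_i$ identifies these two generating systems. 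For $k \subseteq \bbC$ algebraically closed, $H^1_B(-,\bbQ)$ embeds $\Ab(k)_\bbQ$ faithfully into polarizable rational Hodge structures, and classical Hodge-theoretic results (e.g.\ Lieberman-Murre-Vial) under the non-degeneracy assumption give $H^1_{dR}(J_i^a(X)) \otimes_k \bbC \simeq NH^{2i+1}_{dR}(X) \otimes_k \bbC$; combining these with the matching families of curve-generators produces the desired isogeny.

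The main obstacle lies in the second step: the passage from non-degeneracy of \eqref{eq:pairings1} to the $D_{NC}$ instance required by Theorem~\ref{thm:main}(iii). This demands a precise identification of \eqref{eq:pairings1} with the pullback of the categorical trace pairing on $\NChow(k)_\bbQ$ via the Chern character $K_0 \to HP^-$, and then the argument that non-degeneracy of the former forces homological and numerical equivalence to coincide on the image of the Chern character inside the curve-generated piece---a verification that must be carried out compatibly with the orbit-functor decomposition of $\Hom$-groups used in the first step.
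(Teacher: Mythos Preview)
Your first step, the computation $HP^-_{\mathrm{curv}}(\perf(X)) \simeq \bigoplus_{i=0}^{d-1} NH^{2i+1}_{dR}(X)$, is exactly the paper's Proposition~\ref{prop:sum} and is fine.

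The gap is your second step, and you correctly flag it as ``the main obstacle''. You want the non-degeneracy of the pairings \eqref{eq:pairings1} to give you the instance of $D_{NC}$ needed for Theorem~\ref{thm:main}(iii), i.e.\ that homological and numerical equivalence agree on $K_0(\perf(C)\otimes\perf(X))_\bbQ$ for every curve $C$. But non-degeneracy of \eqref{eq:pairings1} is a Lefschetz-type statement about the \emph{curve-generated} piece of $H^{2i+1}_{dR}(X)$; it says nothing about the full Grothendieck group of $C\times X$, and there is no known mechanism by which it forces homological and numerical equivalence to coincide even on the ``relevant sub-quotient'' you describe. Your proposed identification of \eqref{eq:pairings1} with the categorical trace pairing via the Chern character does not close the gap: the Chern character need not be surjective onto the curve-generated piece, and conversely classes in its kernel can contribute to the discrepancy between homological and numerical equivalence. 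So the route through Theorem~\ref{thm:main}(iii) is blocked.

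The paper bypasses Theorem~\ref{thm:main}(iii) entirely. It first translates the non-degeneracy of \eqref{eq:pairings1} to Betti cohomology (Proposition~\ref{prop:agree}), and then invokes Vial's theorem: under that non-degeneracy there exist mutually orthogonal idempotents $\Pi_{2i+1}$ on $M(X)$ in $\Homo(k)_\bbQ$ with $\Pi_{2i+1}M(X)\simeq \underline{\pi}^1 M(J_i^a(X))(-i)$ and $H^\ast_B$ of this piece equal to $NH_B^{2i+1}(X)$. Pushing these idempotents through the bridge \eqref{eq:dig-11} produces a direct summand $\bigoplus_i \underline{\pi}^1\perf(J_i^a(X))$ of $\perf(X)$ inside $\overline{\Ab}(k)_\bbQ\subset\NNum(k)_\bbQ$. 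By the very definition of $\tau$, this summand is contained in $\tau(\perf(X))$ as a split direct factor; a dimension count on $HP^\pm\otimes_k\bbC$---using only the \emph{inclusion} $HP^-(\tau(\perf(X)))\subseteq HP^-_{\mathrm{curv}}(\perf(X))$ from Theorem~\ref{thm:main}(ii), together with your first step and Vial's identification of $H^\ast_B$---then forces the split surjection $\tau(\perf(X))\twoheadrightarrow\bigoplus_i \underline{\pi}^1\perf(J_i^a(X))$ to be an isomorphism. The isogeny \eqref{eq:isom-last} and the ``moreover'' both drop out of this, with no appeal to $D_{NC}$. The key point you are missing is that Vial's projectors already live at the level of motives and give you the abelian-variety summand directly; you do not need to manufacture it by first proving the cohomological equality of item~(iii).
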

As explained in Remark~\ref{rk:explanation}, the pairings \eqref{eq:pairings1} with $i=0$ and $i=d-1$ are always non-degenerate. Moreover, the non-degeneracy of the remaining cases follows from Grothendieck's standard conjecture of Lefschetz type; see \cite[\S5.2.4]{Andre}. Hence, the above pairings \eqref{eq:pairings1} are non-degenerate for curves, surfaces, abelian varieties, complete intersections, uniruled threefolds, rationally connected fourfolds, and for any smooth hypersurface section, product, or finite quotient thereof (and if one trusts Grothendieck they are non-degenerate for all smooth projective $k$-schemes). As a consequence one obtains the following: 
\begin{corollary}\label{cor:main}
Let $k$ be an algebraically closed subfield of $\bbC$. For every smooth projective curve $C$ (resp. surface $S$) there is an isomorphism of abelian varieties up to isogeny ${\bf J}(\perf(C))\simeq J(C)$ (resp. ${\bf J}(\perf(S))\simeq \mathrm{Pic}^0(S) \cup \mathrm{Alb}(S)$).
\end{corollary}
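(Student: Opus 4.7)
The plan is to deduce the corollary as a direct application of Theorem~\ref{thm:main2}, reducing the problem to verifying the non-degeneracy hypothesis on the pairings \eqref{eq:pairings1} in the two cases at hand and then unpacking what $\cup_{i=0}^{d-1} J_i^a(X)$ specializes to when $d=1$ or $d=2$.

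For a smooth projective curve $C$ one has $d=1$, so the relevant range $0\leq i\leq d-1$ collapses to $i=0$, and the only pairing to check is $\langle -,-\rangle : NH_{dR}^1(C)\times NH_{dR}^1(C)\to k$. This is simultaneously the $i=0$ and the $i=d-1$ case, and Remark~\ref{rk:explanation} guarantees its non-degeneracy. Theorem~\ref{thm:main2} then yields ${\bf J}(\perf(C))\simeq J_0^a(C)$, and since the Abel–Jacobi map is surjective in the extremal indices one has $J_0^a(C)=\mathrm{Pic}^0(C)=J(C)$, giving the first half of the statement.

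For a smooth projective surface $S$ one has $d=2$, so $i$ ranges over $\{0,1\}$. Both values are boundary indices ($i=0$ and $i=d-1=1$), hence by Remark~\ref{rk:explanation} both pairings $NH_{dR}^3(S)\times NH_{dR}^1(S)\to k$ and $NH_{dR}^1(S)\times NH_{dR}^3(S)\to k$ are non-degenerate. Theorem~\ref{thm:main2} then provides an isomorphism ${\bf J}(\perf(S))\simeq J_0^a(S)\cup J_1^a(S)$, and using again that the Abel–Jacobi map is surjective in the extremal cases, $J_0^a(S)=\mathrm{Pic}^0(S)$ and $J_1^a(S)=\mathrm{Alb}(S)$, concluding the proof.

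There is no substantive obstacle here beyond correctly invoking Theorem~\ref{thm:main2} and Remark~\ref{rk:explanation}; the only point requiring care is the bookkeeping that in dimension $d\leq 2$ every index $i\in\{0,\dots,d-1\}$ coincides with either $0$ or $d-1$, so that the standard conjecture of Lefschetz type is \emph{not} needed here, and the result is unconditional over an algebraically closed subfield $k\subseteq\bbC$.
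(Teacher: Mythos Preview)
Your proof is correct and follows exactly the approach the paper intends: the corollary is stated immediately after Theorem~\ref{thm:main2} as a direct consequence, with the paper's surrounding text and Remark~\ref{rk:explanation} supplying precisely the non-degeneracy of the pairings for $i=0$ and $i=d-1$, and the introduction supplying $J_0^a(X)=\mathrm{Pic}^0(X)$ and $J_{d-1}^a(X)=\mathrm{Alb}(X)$. Your bookkeeping that for $d\leq 2$ every index is extremal (so the Lefschetz-type conjecture is not needed) is exactly the point.
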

Note that since the Picard and the Albanese varieties are isogenous (see \cite[\S4.3.4]{Andre}), one can replace $\mathrm{Pic}^0(S)$ in the above isomorphism by $\mathrm{Alb}(S)$ or vice-versa.

Theorems~\ref{thm:main} and \ref{thm:main2} (and Corollary~\ref{cor:main}) provide us an affirmative answer to our motivating question. Roughly speaking, the classical theory of Jacobians can in fact be extended to the ``noncommutative world'' as long as one works with all the intermediate Jacobians simultaneously. Note that this restriction is an intrinsic feature of the ``noncommutative world'' which cannot be avoid because as soon as one passes from $X$ to $\perf(X)$ one loses track of the individual pieces of $H^\ast_{dR}(X)$.

\medbreak\noindent\textbf{Acknowledgments:} The authors are very grateful to Joseph Ayoub, Dmitry Kaledin and Burt Totaro for useful discussions.
\section{Preliminaries}
Throughout this note one will reserve the letter $k$ for the base field (which will assumed of characteristic zero) and the symbol $(-)^\natural$ for the classical pseudo-abelian construction. The category of smooth projective $k$-schemes will be denoted by $\SmProj(k)$, its full subcategory of smooth projective curves by $\Curv(k)$, and the de Rham (resp. Betti when $k \subset \bbC$) cohomology functor by
\begin{eqnarray}\label{eq:deRham}
& H^\ast_{dR}: \SmProj(k)^\op \too \mathrm{GrVect}(k) & H^\ast_{B}: \SmProj(k)^\op \too \mathrm{GrVect}(\bbQ)\,,
\end{eqnarray}
where $\mathrm{GrVect}(k)$ (resp. $\mathrm{GrVect}(\bbQ)$) stands for the category of finite dimensional $\bbZ$-graded $k$-vector spaces (resp. $\bbQ$-vector spaces); consult \cite[\S3.4.1]{Andre} for details.
\subsection{Motives}\label{sub:motives}
One will assume that the reader has some familiarity with the category $\Chow(k)_\bbQ$ of Chow motives (see \cite[\S4]{Andre}), with the category $\Homo(k)_\bbQ$ of homological motives (see \cite[\S4.4.2]{Andre}), and with the category $\Num(k)_\bbQ$ of numerical motives (see \cite[\S4.4.2]{Andre}). The Tate motive will be denoted by $\bbQ(1)$. Recall that by construction one has a sequence of $\otimes$-functors
$$ \SmProj(k)^\op \stackrel{M(-)}{\too} \Chow(k)_\bbQ \too \Homo(k)_\bbQ \too \Num(k)_\bbQ\,.$$
\subsection{Dg categories}\label{sub:dg}
For a survey article on dg categories one invites the reader to consult Keller's ICM address \cite{ICM-Keller}. Recall from Kontsevich \cite{IAS,ENS,Miami,finMot} that a dg category $\cA$ is called {\em smooth} and {\em proper} if it is perfect as a $\cA\text{-}\cA$-bimodule over itself and if all its Hom complexes of $k$-vector spaces have finite total cohomology. Examples include ordinary finite dimensional $k$-algebras of finite global dimension and the (unique) dg enhancements $\perf(X)$ of the derived categories of perfect complexes of $\cO_X$-modules in the case where $X \in\SmProj(k)$; consult Lunts-Orlov \cite{LO} and \cite[Example~5.5]{CT1} for further details.
\subsection{Noncommutative motives}\label{sub:NCmotives}
In this subsection one recalls the construction of the categories of noncommutative pure motives. For further details one invites the reader to consult the survey article \cite{survey}.

The category $\NChow(k)_\bbQ$ of {\em noncommutative Chow motives} is the pseudo-abelian envelope of the category whose objects are the smooth and proper dg categories, whose morphisms from $\cA$ to $\cB$ are given by the $\bbQ$-linearized Grothendieck group $K_0(\cA^\op \otimes \cB)_\bbQ$, and whose composition law is induced by the (derived) tensor product of bimodules.

The category $\NHomo(k)_\bbQ$ of {\em noncommutative homological motives} is the pseudo-abelian envelope of the quotient category $\NChow(k)_\bbQ/\mathrm{Ker}(HP^\pm)$, where $HP^\pm$ is the above functor \eqref{eq:HP} induced by periodic cyclic homology.

The category $\NNum(k)_\bbQ$ of {\em noncommutative numerical motives} is the pseudo-abelian envelope of the quotient category $\NChow(k)_\bbQ/\cN$, where $\cN$ is the largest $\otimes$-ideal\footnote{As proved in \cite{Numerical,Kontsevich}, this ideal admits two explicit descriptions: one in terms of Hochschild homology and the other one in terms of a well-behaved bilinear form on the Grothendieck group.} of $\NChow(k)_\bbQ$ distinct from the entire category. 

All the above categories carry a symmetric monoidal structure which is induced by the tensor product of dg categories. Moreover, as in the case of pure motives, one has also a sequence of (full) $\otimes$-functors
$$ \NChow(k)_\bbQ \too \NHomo(k)_\bbQ \too \NNum(k)_\bbQ\,.$$
\subsection{Orbit categories}\label{sub:orbit}
Let $\cC$ be a $\bbQ$-linear symmetric monoidal category and $\cO \in \cC$ a $\otimes$-invertible object. As explained in \cite[\S7]{CvsNC}, the {\em orbit category} $\cC/_{\!\!-\otimes \cO}$ has the same objects as $\cC$ and morphisms given by 
$$ \Hom_{\cC/_{\!\!-\otimes \cO}}(a,b) := \oplus_{j \in \bbZ} \Hom_\cC(a,b\otimes \cO^{\otimes j})\,.$$
The composition law is induced by the one on $\cC$. By construction, $\cC/_{\!\!-\otimes \cO}$ is $\bbQ$-linear and symmetric monoidal (see \cite[Lemma~7.1]{CvsNC}) and comes equipped with a canonical projection $\bbQ$-linear $\otimes$-functor $\mu:\cC\to \cC/_{\!\!-\otimes \cO}$. Moreover, $\mu$ is endowed with a canonical $2$-isomorphism $\mu \circ (-\otimes \cO) \stackrel{\sim}{\Rightarrow} \mu$ and is $2$-universal among all such functors.
\section{A key bridge}\label{sec:bridge}
In this section one describes a precise bridge between the categories of motives and the categories of noncommutative motives. This bridge will play a key role in the construction of the Jacobian functor. 
\begin{theorem}\label{thm:bridge}
There exist $\bbQ$-linear additive $\otimes$-functors $R, R_H$ and $R_\cN$ making the following diagram commute
\begin{equation}\label{eq:dig-1}
\xymatrix{
\Chow(k)_\bbQ \ar[d] \ar[r]^-\mu & \Chow(k)_\bbQ/_{\!\!-\otimes \bbQ(1)} \ar[d] \ar[r]^-R & \NChow(k)_\bbQ \ar[d] \\
\Homo(k)_\bbQ\ar[r]^-\mu \ar[d] & \Homo(k)_\bbQ/_{\!\!-\otimes \bbQ(1)} \ar[d] \ar[r]^-{R_H} & \NHomo(k)_\bbQ \ar[d] \\
\Num(k)_\bbQ \ar[r]^-\mu & \Num(k)_\bbQ/_{\!\!-\otimes \bbQ(1)} \ar[r]^-{R_\cN} & \NNum(k)_\bbQ\,.
}
\end{equation}
Moreover, $R_H$ is full and $R$ and $R_\cN$ are fully faithful.
\end{theorem}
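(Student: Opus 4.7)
The plan is to first construct $R$ on the generating subcategory $\{M(X)\}_{X \in \SmProj(k)}$ of $\Chow(k)_\bbQ$, extend additively to the pseudo-abelian envelope, and then descend through successive equivalence relations to obtain $R_H$ and $R_\cN$. For the construction of $R$ itself: I would define a $\bbQ$-linear additive $\otimes$-functor $\Phi \colon \Chow(k)_\bbQ \to \NChow(k)_\bbQ$ on generators by $M(X) \mapsto \perf(X)$. Using the standard identifications
\[
\Hom_{\Chow(k)_\bbQ}(M(X), M(Y)(j)) \simeq CH^{\dim X + j}(X\times Y)_\bbQ, \qquad \Hom_{\NChow(k)_\bbQ}(\perf(X), \perf(Y)) \simeq K_0(X\times Y)_\bbQ,
\]
the Chern character gives the morphism-level definition, with Grothendieck--Riemann--Roch guaranteeing compatibility with composition. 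Since the Chern character is insensitive to codimension, one obtains a natural $2$-isomorphism $\Phi \circ (-\otimes \bbQ(1)) \stackrel{\sim}{\Rightarrow} \Phi$ (reflecting, on one side, $M(\bbP^1) \simeq \bbQ(0) \oplus \bbQ(1)$ and, on the other, the exceptional decomposition $\perf(\bbP^1) \simeq k \oplus k$). The $2$-universal property of the orbit category then yields $R$ making the upper right square of \eqref{eq:dig-1} commute, and its fully faithfulness reduces to the rational Chern character isomorphism
\[
\bigoplus_{j \in \bbZ} CH^{\dim X + j}(X\times Y)_\bbQ \;\simeq\; CH^\ast(X\times Y)_\bbQ \;\stackrel{ch}{\simeq}\; K_0(X\times Y)_\bbQ;
\]
the extension to the pseudo-abelian envelope is automatic since the orbit category construction commutes with idempotent completion.

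For the middle row, I would invoke the Weibel--Keller comparison $HP^{\pm}(\perf(X)) \simeq \bigoplus_{n \text{ even/odd}} H^n_{dR}(X)$, compatible through the Chern character with the de Rham cycle class map. Any morphism in $\Chow(k)_\bbQ/_{\!\!-\otimes \bbQ(1)}$ that is homologically trivial therefore maps into $\ker(HP^\pm) \subset \NChow(k)_\bbQ$, which produces $R_H$; fullness of $R_H$ is inherited directly from that of $R$. For the bottom row, using the alternative description of $\cN$ as the radical of a bilinear form on $K_0$ (recalled in the footnote of \S\ref{sub:NCmotives}) together with the Euler-characteristic identity $\chi(E \otimes F^\vee) = \langle ch(E), ch(F)\rangle$ on $X\times Y$, one identifies noncommutative numerical equivalence with classical numerical equivalence summed over all codimensions, yielding $R_\cN$ together with its fully faithfulness.

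The main obstacle is this final identification of ideals in the numerical layer: whereas the descent to $R_H$ is essentially a formal consequence of the standard $HP$--de Rham comparison, $\cN$ is defined only abstractly as the unique maximal proper $\otimes$-ideal, and translating this abstract maximality into an intersection-theoretic condition on classical Chow cycles requires both the explicit bilinear-form description of $\cN$ and a careful GRR-type calculation relating the Euler pairing on $K_0(X\times Y)$ with the Poincar\'e pairing on $CH^\ast(X\times Y)$.
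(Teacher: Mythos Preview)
Your approach is correct in outline but structurally quite different from the paper's. The paper does not construct $R$ and $R_\cN$ at all: it imports them wholesale from \cite[Thm.~1.12]{Numerical}, where the Chern-character/GRR argument you sketch is carried out. What the paper actually proves here is only the middle row. It builds $R_H$ not by checking that homologically trivial cycles land in $\ker(HP^\pm)$, but by first identifying the composite $HP^\pm\circ R\circ\mu:\Chow(k)_\bbQ\to\mathrm{sVect}(k)$ with the super-periodization of de Rham cohomology, concluding that $\Chow(k)_\bbQ\to\NHomo(k)_\bbQ$ factors through $\Homo(k)_\bbQ$, and then invoking the $2$-universal property of the orbit category. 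Because $R_\cN$ is imported rather than obtained by further descent from $R_H$, the paper must separately verify that the lower-right square commutes, which it does by another appeal to the universal property of $\mu$; in your descent-from-$R$ approach this commutativity is closer to automatic, but you should still say why.

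Your one genuinely thin spot is the sentence ``fullness of $R_H$ is inherited directly from that of $R$.'' The paper is more careful: it factors $R_H$ as
\[
\Homo(k)_\bbQ/_{-\otimes\bbQ(1)}\;\longrightarrow\;\bigl(\Chow(k)_\bbQ/_{-\otimes\bbQ(1)}\bigr)/\mathrm{Ker}\;\stackrel{\overline{R}}{\longrightarrow}\;\NHomo(k)_\bbQ,
\]
observes that $\overline{R}$ is fully faithful because $R$ is, and then checks that the left arrow is full by exhibiting the explicit lift $[\{f_j\}_j]\mapsto\{[f_j]\}_j$. The subtlety is that $\Homo(k)_\bbQ/_{-\otimes\bbQ(1)}$ is the orbit category of a quotient, whereas the middle term is the quotient of an orbit category, and these differ in general; fullness survives but faithfulness need not, which is why only fullness of $R_H$ is claimed. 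Your ``inherited directly'' is true, but hides exactly this step. (A side remark: the orbit category does \emph{not} in general commute with idempotent completion, contrary to your parenthetical; fortunately nothing in the argument needs this.)
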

\begin{proof}
Note first that the middle vertical functors are induced by the left vertical ones. Consequently, the upper left square and the lower left square are commutative. The $\bbQ$-linear additive fully faithful $\otimes$-functors $R$ and $R_\cN$, making the outer square of \eqref{eq:dig-1} commutative, where constructed in \cite[Thm.~1.12]{Numerical}. Let us now construct $R_H$. Consider the composed functor
\begin{equation}\label{eq:comp}
\Chow(k)_\bbQ \stackrel{\mu}{\too} \Chow(k)_\bbQ/_{\!\!-\otimes \bbQ(1)} \stackrel{R}{\too} \NChow(k)_\bbQ \stackrel{HP^\pm}{\too} \mathrm{sVect}(k)\,.
\end{equation}
As explained in the proof of \cite[Thm.~1.3]{Galois}, this composed functor agrees with the super-perioditization of de Rham cohomology \eqref{eq:deRham}, \ie it agrees with 
\begin{eqnarray}\label{eq:description}
&\Chow(k)_\bbQ \too \mathrm{sVect}(k)& X \mapsto \big(\oplus_{i \,\mathrm{even}} H^i_{dR}(X), \oplus_{i \,\mathrm{odd}} H^i_{dR}(X)  \big)\,.
\end{eqnarray}
As a consequence, the composition
$$ \Chow(k)_\bbQ \stackrel{\mu}{\too} \Chow(k)_\bbQ/_{\!\!-\otimes \bbQ(1)} \stackrel{R}{\too} \NChow(k)_\bbQ \too \NHomo(k)_\bbQ$$
descends to $\Homo(k)_\bbQ$. Moreover, since the Tate motive $\bbQ(1)$ is mapped to the $\otimes$-unit of $\NHomo(k)_\bbQ$, one obtains by the universal property of the orbit category a well-defined $\bbQ$-linear additive $\otimes$-functor $R_H$ making the upper right square commute. It remains then only to show that the lower right square is also commutative. Note that the lower rectangle (consisting of the lower left and right squares) is commutative. This follows from the fact the outer square of \eqref{eq:dig-1} is commutative, that the functor $\Chow(k)_\bbQ \to \Homo(k)_\bbQ$ is full, and that every object in $\Homo(k)_\bbQ$ is a direct factor of an object in the image of this latter functor. Consequently, one observes that the two composed functors from $\Homo(k)_\bbQ/_{\!\!-\otimes \bbQ(1)}$ to $\NNum(k)_\bbQ$ agree when precomposed with $\mu$. Using once again the universal property of the orbit category, one concludes then that these two composed functors are in fact the same, \ie that the lower right square is commutative.

Let us now prove that $R_H$ is full. Consider the following commutative diagram
$$
\xymatrix{
\Chow(k)_\bbQ/_{\!\!-\otimes \bbQ(1)} \ar[d] \ar[r]^-R & \NChow(k)_\bbQ \ar[d] \ar[r]^-{HP^\pm} & \mathrm{sVect}(k) \\
(\Chow(k)_\bbQ/_{\!\!-\otimes \bbQ(1)})/\mathrm{Ker} \ar[r]^-{\overline{R}} & \NHomo(k)_\bbQ \ar[ur]_-{HP^\pm} & \,,
}
$$
where $\mathrm{Ker}$ stands for the kernel of the upper horizontal composition and $\overline{R}$ for the induced functor. Clearly, $\overline{R}$ is fully faithful since this is the case for $R$. By the universal property of the orbit category, one observes that the functor $R_H$ admits the following factorization
$$ R_H: \Homo(k)_\bbQ/_{\!\!-\otimes \bbQ(1)} \to (\Chow(k)_\bbQ/_{\!\!-\otimes \bbQ(1)})/\mathrm{Ker} \stackrel{\overline{R}}{\too} \NHomo(k)_\bbQ\,.$$
Hence, it suffices to show that the left-hand-side functor is full. This is the case since every morphism $[\{f_j\}_{j \in \bbZ}]$ in $(\Chow(k)_\bbQ/_{\!\!-\otimes \bbQ(1)})/\mathrm{Ker}$ admits a canonical lift to a morphism $\{[f_j]\}_{j \in \bbZ}$ in $\Homo(k)_\bbQ$.
\end{proof}
\section{Proof of theorem \ref{thm:main}}
{\bf Item (i)} As explained in \cite[Prop.~4.2.5.1]{Andre}, de Rham cohomology descends (uniquely) to $\Chow(k)_\bbQ$ and hence to $\Homo(k)_\bbQ$. One obtains then the following commutative diagram:
\begin{equation}\label{eq:diagram}
\xymatrix{
\SmProj(k)^\op \ar[d]_-{M(-)} \ar[rr]^-{H^\ast_{dR}} && \mathrm{GrVect}(k) \\
\Homo(k)_\bbQ \ar@/_2ex/[urr]_-{H^\ast_{dR}} & & \,.
}
\end{equation}
Recall from \cite[\S5.1]{Andre} that for every $X \in \SmProj(k)$ of dimension $d$ one has well-defined K{\"u}nneth projectors
\begin{eqnarray*}
\pi^i: H^\ast_{dR}(X) \twoheadrightarrow H^i_{dR}(X) \hookrightarrow H^\ast_{dR}(X) && 0 \leq i \leq 2d\,.
\end{eqnarray*}
As explained in \cite[\S4.3.4]{Andre}, the first K{\"u}nneth projector $\pi^1$ is always {\em algebraic}, \ie there exists a (unique) correspondence $\underline{\pi}^1 \in \End_{\Homo(k)_\bbQ}(M(X))$ such that $H^\ast_{dR}(\underline{\pi}^1)=\pi^1$. Moreover, as proved in \cite[Corollary~3.4]{Scholl}, the passage from a smooth projective curve to its Jacobian (abelian) variety $J(C)$ gives rise to an equivalence of categories
\begin{eqnarray}\label{eq:equivalence}
\Homo(k)_\bbQ \supset \{\underline{\pi}^1 M(C) \, |\, C \in \Curv(k)\}^\natural \stackrel{\simeq}{\too} \Ab(k)_\bbQ && C \mapsto J(C)\,.
\end{eqnarray}
This equivalence of categories is independent of the equivalence relation on cycles and so the diagram commutes:
\begin{equation}\label{eq:diag-auxiliar}
\xymatrix{
\Homo(k)_\bbQ \supset \{\underline{\pi}^1 M(C) \, |\, C \in \Curv(k)\}^\natural  \ar[r]^-\simeq \ar@<-15ex>[d] \ar@<5ex>@{=}[d] & \Ab(k)_\bbQ \ar@{=}[d] \\
\Num(k)_\bbQ \supset \{\underline{\pi}^1 M(C) \, |\, C \in \Curv(k)\}^\natural \ar[r]^-\simeq & \Ab(k)_\bbQ\,.
}
\end{equation}
By combining \cite[Thm.~1.12]{Numerical} with Theorem~\ref{thm:bridge}, one obtains the following commutative diagram
\begin{equation}\label{eq:big}
\xymatrix@C=2em@R=2em{
& \SmProj(k)^\op \ar[d]_-{M(-)} \ar@/^2ex/[ddrr]^-{\perf(-)} &&\\
& \Chow(k)_\bbQ \ar[d] \ar[dr]^-\mu && \\
& \Homo(k)_\bbQ  \ar[d]_\mu \ar[dl] & \Chow(k)_\bbQ/_{\!\!-\otimes \bbQ(1)} \ar[dl] \ar[r]^-R & \NChow(k)_\bbQ\ar[d]  \\
\Num(k)_\bbQ \ar[d]_\mu & \Homo(k)_\bbQ/_{\!\!-\otimes \bbQ(1)}  \ar[dl]  \ar[rr]^-{R_H} && \ar[dl] \NHomo(k)_\bbQ  \\
\Num(k)_\bbQ/_{\!\!-\otimes \bbQ(1)} \ar[rr]_{R_{\cN}} & & \NNum(k)_\bbQ &
}
\end{equation}
and consequently, using \eqref{eq:diag-auxiliar}, the diagram
\begin{equation}\label{eq:composed10}
\xymatrix{
\Ab(k)_\bbQ \ar[r]^-{\eqref{eq:equivalence}^{-1}} \ar@{=}[d] & \Homo(k)_\bbQ \ar[r]^-\mu \ar[d] & \Homo(k)_\bbQ/_{\!\!-\otimes \bbQ(1)} \ar[r]^-{R_H} \ar[d] & \NHomo(k)_\bbQ \ar[d] \\
\Ab(k)_\bbQ \ar[r]^-{\eqref{eq:equivalence}^{-1}} & \Num(k)_\bbQ \ar[r]^-\mu & \Num(k)_\bbQ/_{\!\!-\otimes \bbQ(1)} \ar[r]^-{R_\cN} & \NNum(k)_\bbQ\,. 
}
\end{equation}
As proved in Lemma~\ref{lem:aux} below, both horizontal compositions in \eqref{eq:composed10} are fully faithful. Let us then write $\overline{\Ab}(k)_\bbQ$ for their image. Note that by construction one has the commutative square:
\begin{equation}\label{eq:aux33}
\xymatrix{
\Ab(k)_\bbQ \ar[rrr]^-{R_H \circ \mu \circ \eqref{eq:equivalence}^{-1}}_-{\simeq}  \ar@{=}[d]  &&& \overline{\Ab}(k)_\bbQ \subset \NHomo(k)_\bbQ \ar@<-8ex>@{=}[d]  \ar@<4ex>[d]\\
\Ab(k)_\bbQ \ar[rrr]^-{R_\cN \circ \mu \circ \eqref{eq:equivalence}^{-1}}_-{\simeq} &&& \overline{\Ab}(k)_\bbQ \subset \NNum(k)_\bbQ \,.
}
\end{equation}
Now, as proved in \cite[Thm.~1.9(ii)]{Numerical}, the category $\NNum(k)_\bbQ$ is abelian semi-simple. As a consequence every object $N \in \NNum(k)_\bbQ$ admits a unique finite direct sum decomposition $N \simeq S_1 \oplus \cdots \oplus S_n$ into simple objects. Let us denote by $\cS$ the set of those simple objects which belong to $\overline{\Ab}(k)_\bbQ \subset \NNum(k)_\bbQ$. Making use of it, one introduces the truncation functor
\begin{eqnarray}\label{eq:truncation}
\NNum(k)_\bbQ \too \overline{\Ab}(k)_\bbQ && N \mapsto \tau(N)
\end{eqnarray}
that associates to every noncommutative numerical motive $N=S_1 \oplus \cdots \oplus S_n$ its subsume consisting of those simple objects that belong to $\cS$. Note that \eqref{eq:truncation} is well defined since the equality $\Hom_{\NNum(k)_\bbQ}(S_i,S_j)=\delta_{ij} \cdot \bbQ$ implies that every morphism $N \to N'$ in $\NNum(k)_\bbQ$ restricts to a morphism $\tau(N) \to \tau(N')$ in $\overline{\Ab}(k)_\bbQ$. The desired Jacobian functor \eqref{eq:J-functor} can now be defined as the following composition
$$ {\bf J}(-): \NChow(k)_\bbQ \too \NNum(k)_\bbQ \stackrel{\eqref{eq:truncation}}{\too} \overline{\Ab}(k)_\bbQ \stackrel{(R_\cN \circ \mu \circ \eqref{eq:equivalence}^{-1})^{-1}}{\too} \Ab(k)_\bbQ\,.$$
Clearly, this functor is $\bbQ$-linear and additive.
\begin{lemma}\label{lem:aux}
Both horizontal compositions in \eqref{eq:composed10} are fully faithful.
\end{lemma}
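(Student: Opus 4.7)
My plan for Lemma~\ref{lem:aux} is to analyze each of the three functors that make up the two horizontal compositions of \eqref{eq:composed10} separately, and then bootstrap from the bottom row to the top row via a diagram chase.

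By construction the first arrow $\eqref{eq:equivalence}^{-1}$ is an equivalence of categories, in particular fully faithful. The last arrow is $R_\cN$ for the bottom row, which is fully faithful by Theorem~\ref{thm:bridge}, and $R_H$ for the top row, which at this stage is only known to be full. The crux of the argument is therefore to prove that the middle arrow, the orbit-category projection $\mu$, when restricted to the image subcategory $\{\underline{\pi}^1 M(C) \,|\, C\in \Curv(k)\}^\natural$, is fully faithful in both $\Homo(k)_\bbQ$ and $\Num(k)_\bbQ$.

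Using the explicit description of Hom-groups in the orbit category given in \S\ref{sub:orbit}, together with the fact that morphisms in the pseudo-abelian closure $\{\underline{\pi}^1 M(C)\}^\natural$ are computed out of those between the generators, this reduces to establishing the vanishing
\[
\Hom_{\cC}\bigl(\underline{\pi}^1 M(C),\, \underline{\pi}^1 M(C')\otimes \bbQ(1)^{\otimes j}\bigr) \;=\; 0 \qquad (\,j \neq 0;\ C, C' \in \Curv(k)\,)
\]
for $\cC = \Homo(k)_\bbQ$ and $\cC = \Num(k)_\bbQ$. For the homological case, I would invoke the de Rham realization: $H^\ast_{dR}(\underline{\pi}^1 M(C))$ is concentrated in cohomological degree $1$, whereas $H^\ast_{dR}(\underline{\pi}^1 M(C')\otimes \bbQ(1)^{\otimes j})$ is concentrated in the different degree $1 \pm 2j$. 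Consequently any correspondence between the two is sent to the zero morphism of graded $k$-vector spaces, hence vanishes already in $\Homo(k)_\bbQ$, as this category is defined modulo homological equivalence. The analogous vanishing in $\Num(k)_\bbQ$ is then automatic, since $\Hom_{\Num}$ is a quotient of $\Hom_{\Homo}$.

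Granted this vanishing, the bottom row of \eqref{eq:composed10} is a composition of three fully faithful functors and hence fully faithful. The top row, by contrast, is at this stage only seen to be \emph{full}, because $R_H$ is only full. To upgrade fullness to full faithfulness I would exploit the commutativity of the right-hand square of \eqref{eq:composed10}: the projection $\NHomo(k)_\bbQ \to \NNum(k)_\bbQ$ post-composed with the top row equals the bottom row, which is already faithful, forcing the top row itself to be faithful. The only mildly delicate point I anticipate is the degree bookkeeping for $\bbQ(1)$ under $H^\ast_{dR}$, but any standard convention makes the nonzero shift $\pm 2j$ transparent and the whole argument goes through.
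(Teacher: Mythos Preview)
Your proposal is correct and follows essentially the same route as the paper's proof: both arguments reduce the key step to the vanishing $\Hom\bigl(\underline{\pi}^1 M(C),\underline{\pi}^1 M(C')(j)\bigr)=0$ for $j\neq 0$ via the de Rham degree count, conclude that $\mu$ is fully faithful on the relevant subcategory, get the bottom row from the full faithfulness of $R_\cN$, and then use the commutative square with $\NHomo(k)_\bbQ\to\NNum(k)_\bbQ$ to upgrade the top row from full to faithful. The only cosmetic difference is that the paper deduces the $\Num$-vanishing from the $\Homo$-vanishing by invoking fullness of $\Homo(k)_\bbQ\to\Num(k)_\bbQ$, whereas you phrase it directly as ``$\Hom_{\Num}$ is a quotient of $\Hom_{\Homo}$''; these are of course the same statement.
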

\begin{proof}
Let us start by showing that both horizontal compositions in the diagram
\begin{equation}\label{diagram-new}
\xymatrix{
\Ab(k)_\bbQ \ar[r]^-{\eqref{eq:equivalence}^{-1}} \ar@{=}[d] & \Homo(k)_\bbQ \ar[d] \ar[r]^-\mu & \Homo(k)_\bbQ/_{\!\!-\otimes \bbQ(1)} \ar[d] \\
\Ab(k)_\bbQ \ar[r]_-{\eqref{eq:equivalence}^{-1}} & \Num(k)_\bbQ \ar[r]_-\mu & \Num(k)_\bbQ/_{\!\!-\otimes \bbQ(1)}
}
\end{equation}
are fully faithful. Since $\mu$ is faithful and the middle vertical functor (and hence also the right vertical one) is full, it suffices to show that the upper horizontal composition is full. The ``curved'' functor $H^\ast_{dR}$ of diagram \eqref{eq:diagram} is faithful and symmetric monoidal. Moreover, it maps $\bbQ(j)$ to the field $k$ placed in degree $-2j$. Hence, one obtains the following inclusion
$$ \Hom_{\Homo(k)_\bbQ}(\underline{\pi}^1 M(C), \underline{\pi}^1 M(C')(j)) \hookrightarrow \Hom_{\mathrm{GrVect}(k)}(H^1_{dR}(C), H^1_{dR}(C')[-2j])$$
for every integer $j$ and smooth projective curves $C$ and $C'$. The right-hand-side vanishes for $j \neq 0$ and consequently also the left-hand-side. Cleary the same holds for all the direct factors of $\underline{\pi}^1M(C)$ and $\underline{\pi}^1M(C')(j)$. By definition of the orbit category one then concludes that the upper horizontal composition is full. This proves our claim. Now, since $R_H$ is full and $R_\cN$ is fully faithful (see Theorem~\ref{thm:bridge}) the proof follows from the commutativity of diagram \eqref{eq:composed10}.
\end{proof}
{\bf Item (ii)} By construction, the equivalence \eqref{eq:equivalence} is compatible with de Rham cohomology in the sense that the following diagram commutes:
\begin{equation}\label{eq:H11}
\xymatrix{
\{\underline{\pi}^1M(C)\,|\, C \in \Curv(k)\}^\natural \subset \Homo(k) \ar[d]^-{\simeq}_-{\eqref{eq:equivalence}} \ar[rr]^-{\oplus_{i\, \mathrm{odd}}H^i_{dR}(-)} && \mathrm{Vect}(k) \\
\Ab(k)_\bbQ \ar@/_2ex/[urr]_-{H^1_{dR}(-)} &&\,.
}
\end{equation}
Recall from the proof of Theorem~\ref{thm:bridge} that the functor \eqref{eq:comp} agrees with the super-perioditization \eqref{eq:description} of de Rham cohomology. By combining this fact with the commutativity of diagram \eqref{eq:dig-1}, one concludes that
\begin{equation*}
\Homo(k)_\bbQ \stackrel{\mu}{\too} \Homo(k)_\bbQ/_{\!\!-\otimes \bbQ(1)} \stackrel{R_H}{\too} \NHomo(k)_\bbQ \stackrel{HP^-}{\too} \mathrm{Vect}(k)
\end{equation*}
agrees with the functor $\oplus_{i\, \mathrm{odd}}H^i_{dR}(-)$. Hence, since by construction the horizontal compositions in \eqref{eq:composed10}-\eqref{eq:aux33} map ${\bf J}(N)$ to $\tau(N)$, one obtains the equality 
\begin{equation}\label{eq:equality-new}
H^1_{dR}({\bf J}(N)) = HP^-(\tau(N))\,.
\end{equation}
Now, recall from item (i) that $N$ decomposes (uniquely) into a finite direct sum $S_1 \oplus \cdots \oplus S_n$ of simple objects in $\NNum(k)_\bbQ$ and that $\tau(N) \in \overline{\Ab}(k)_\bbQ$ is by definition the subsume consisting of those simple objects that belong to $\cS$. One has then an inclusion $\delta: \tau(N) \hookrightarrow N$ and a projection $N \twoheadrightarrow \tau(N)$ morphism such that $\rho \circ \delta = \id_{\tau(N)}$. Let $\overline{\delta}$ be a lift of $\delta$ and $\overline{\rho}$ a lift of $\rho$ along the vertical functor of diagram \eqref{eq:aux33}. Note that $\overline{\rho} \circ \overline{\delta} = \id_{\tau(N)}$ since $\tau(N) \in \overline{\Ab}(k)_\bbQ$.

The following equivalence of categories
\begin{equation}\label{eq:induced}
\Ab(k)_\bbQ \simeq \{\underline{\pi}^1M(C)\,|\,C \in \Curv(k)\}^\natural \stackrel{R_\cN \circ \mu}{\too} \overline{\Ab}(k)_\bbQ
\end{equation}
show us that every object in $\overline{\Ab}(k)_\bbQ$ is a direct factor of a noncommutative numerical motive of the form $\underline{\pi}^1 \perf(C)$ with $C \in \Curv(k)$; note that thanks to the commutativity of diagram \eqref{eq:big} the image of $M(C)$ under \eqref{eq:induced} identifies with $\perf(C)$. Hence, since by construction $\tau(N) \in \overline{\Ab}(k)_\bbQ$ there exists a smooth projective curve $C_N$, a surjective morphism $\overline{\Gamma_N}: \underline{\pi}^1 \perf(C_N) \twoheadrightarrow \tau(N)$ in $\overline{\Ab}(k)_\bbQ$, and a section $s:\tau(N) \to \underline{\pi}^1\perf(C_N)$ of $\overline{\Gamma_N}$ expressing $\tau(N)$ as a direct factor of $\underline{\pi}^1\perf(C_N)$. Consider now the following composition
\begin{equation}\label{eq:morph1}
\underline{\pi}^1 \perf(C_N) \stackrel{\overline{\Gamma_N}}{\twoheadrightarrow} \tau(N) \stackrel{\overline{\delta}}{\hookrightarrow} N
\end{equation}
in $\NHomo(k)_\bbQ$. Since $\underline{\pi}^1\perf(C_N)$ is a direct factor of $\perf(C_N)$, the morphism \eqref{eq:morph1} can be extended to $\perf(C_N)$ (by mapping the other direct sum component to zero) and furthermore lifted to a morphism $\Gamma_N:\perf(C_N) \to N$ in $\NChow(k)_\bbQ$ (using the fact that the functor $\NChow(k)_\bbQ \to \NHomo(k)_\bbQ$ is full).

Let us now show that $HP^-(\tau(N)) = \mathrm{Im}HP^-(\Gamma_N)$. Recall that the odd de Rham cohomology of a curve is supported in degree $1$. As a consequence, making use again of the commutativity of diagram \eqref{eq:big}, one obtains the commutative square
$$
\xymatrix{
H^1_{dR}(C_N) = HP^-(\perf(C_N)) \ar@{=}[d] \ar[rr]^-{HP^-(\Gamma_N)} && HP^-(N) \\
H^1_{dR}(C_N) = HP^-(\underline{\pi}^1\perf(C_N))  \ar[rr]_-{HP^-(\overline{\Gamma_N})} && HP^-(\tau(N)) \ar[u]_-{HP^-(\overline{\delta})}
}
$$
which shows us that the morphism $HP^-(\Gamma_N)$ factors through $HP^-(\tau(N))$. Since $\overline{\Gamma_N}$ admits a section $s$ and $\overline{\delta}$ a retraction $\overline{\rho}$, one concludes by functoriality that $HP^-(\overline{\Gamma_N})$ is surjective and that $HP^-(\overline{\delta})$ is injective. This implies the equality $HP^-(\tau(N)) = \mathrm{Im} HP^-(\Gamma_N)$. Finally, by combining this equality with \eqref{eq:equality-new} one obtains the desired equality $H^1_{dR}({\bf J}(N)) = \mathrm{Im}HP^-(\Gamma_N)$.

\vspace{0.1cm}

{\bf Item (iii)} 
As explained at item (ii), one has $H^1_{dR}({\bf J}(N)) = HP^-(\tau(N)) \subseteq HP^-_{\mathrm{curv}}(N)$. Hence, it remains only to prove the converse inclusion $HP^-_{\mathrm{curv}}(N) \subseteq HP^-(\tau(N))$. Recall that for every smooth projective curve $C$, one has a canonical (split) inclusion morphism $\underline{\pi}^1M(C) \hookrightarrow M(C)$ in $\Homo(k)_\bbQ$. Let us denote by $\iota: \underline{\pi}^1\perf(C) \hookrightarrow \perf(C)$ its image in $\NHomo(k)_\bbQ$ under the composed functor
\begin{equation}\label{eq:composed1}
\Homo(k)_\bbQ \stackrel{\mu}{\too} \Homo(k)_\bbQ/_{\!\!-\otimes \bbQ(1)} \stackrel{R_\cN}{\too} \NHomo(k)_\bbQ\,.
\end{equation}
Given a morphism $\Gamma: \perf(C) \to N$ in $\NChow(k)_\bbQ$, one can then consider the following diagram
\begin{equation}\label{eq:square}
\xymatrix{
\perf(C) \ar[rr]^-\Gamma && N \\
\underline{\pi}^1 \perf(C) \ar[u]^-{\iota} \ar@{-->}[rr]_-{\overline{\Gamma}} && \tau(N) \ar[u]_-\delta
}
\end{equation}
in $\NNum(k)_\bbQ$. Since $\underline{\pi}^1 \perf(C)\in \overline{\Ab}(k)_\bbQ$ one observes that by definition of $\tau(N)$ the morphism $\Gamma \circ \iota$ factors uniquely through $\tau(N)$ via a morphism $\overline{\Gamma}$. Diagram \eqref{eq:square} becomes then a well-defined commutative square. Now, let us consider the following diagram
\begin{equation}\label{eq:square1}
\xymatrix{
\perf(C) \ar[rr]^-\Gamma && N \\
\underline{\pi}^1 \perf(C) \ar[u]^-{\iota} \ar[rr]_-{\overline{\Gamma}} && \tau(N) \ar[u]_-{\overline{\delta}}
}
\end{equation}
in $\NHomo(k)_\bbQ$. Note that \eqref{eq:square1} is mapped to \eqref{eq:square} by the functor $\NHomo(k)_\bbQ \to \NNum(k)_\bbQ$. As in the ``commutative world'', one has $\perf(C) \simeq \perf(C)^\op$. Hence, since the conjecture $D_{NC}(\perf(C) \otimes N)$ holds one has the following equality of $\bbQ$-vector spaces
$$ \Hom_{\NHomo(k)_\bbQ}(\perf(C),N) = \Hom_{\NNum(k)_\bbQ}(\perf(C),N)\,;$$
see \cite[\S10]{Galois}. Clearly, the same holds with $\perf(C)$ replaced by $\underline{\pi}^1\perf(C)$. Consequently, one concludes that the above square \eqref{eq:square1} is in fact commutative. By applying it with the functor $HP^-$ (and using again the commutativity of diagram \eqref{eq:big}), one obtains the commutative square
$$
\xymatrix{
H^1_{dR}(C) = HP^-(\perf(C)) \ar[rr]^-{HP^-(\Gamma)} && HP^-(N) \\
H^1_{dR}(C) = HP^-(\underline{\pi}^1\perf(C))  \ar@{=}[u] \ar[rr]_-{HP^-(\overline{\Gamma})}&& HP^-(\tau(N)) \ar[u]_-{HP^-(\overline{\delta})}\,,
}
$$
which shows us that the morphism $HP^-(\Gamma)$ factors through $HP^-(\tau(N))$. Since this factorization occurs for every smooth projective curve $C$ and for every morphism $\Gamma: \perf(C) \to N$ in $\NChow(k)_\bbQ$ one obtains the inclusion $HP^-(\tau(N)) \subseteq HP^-_{\mathrm{curv}}(N)$. This concludes the proof of item (iii) and hence of Theorem~\ref{thm:main}.
\section{Bilinear pairings}
In this section one proves some (technical) results concerning bilinear pairings. These results will play a key role in the proof of Theorem~\ref{thm:main2}.
\begin{proposition}\label{prop:sum}
For every smooth projective $k$-scheme $X$ of dimension $d$ there is a natural isomorphism of $k$-vector spaces
\begin{equation}\label{eq:can-iso}
HP^-_{\mathrm{curv}}(\perf(X)) \simeq \oplus_{i=0}^{d-1} NH_{dR}^{2i+1}(X)\,.
\end{equation}
\end{proposition}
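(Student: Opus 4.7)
The plan is to leverage Theorem~\ref{thm:bridge} to translate the ``noncommutative curves-generated'' piece $HP^-_{\mathrm{curv}}(\perf(X))$ into a direct sum of ``commutative curves-generated'' pieces indexed by the Tate twists, each of which is visibly an $NH^{2i+1}_{dR}(X)$.

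First, I would invoke the fact that the functor $R:\Chow(k)_\bbQ/_{\!\!-\otimes \bbQ(1)}\to \NChow(k)_\bbQ$ of Theorem~\ref{thm:bridge} is fully faithful and sends $M(Y)\mapsto \perf(Y)$ for every $Y\in\SmProj(k)$. Combined with the definition of morphisms in the orbit category, this yields a canonical identification
\[
\Hom_{\NChow(k)_\bbQ}(\perf(C),\perf(X))\;\simeq\;\bigoplus_{j\in\bbZ}\Hom_{\Chow(k)_\bbQ}(M(C),M(X)(j)),
\]
so every $\Gamma:\perf(C)\to \perf(X)$ corresponds to a (finitely supported) family $\{\gamma_j\}_{j\in\bbZ}$ of Chow correspondences.

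Next, I would use the key observation (already established in the proof of Theorem~\ref{thm:bridge}, via \eqref{eq:description}) that the composite $HP^\pm\circ R\circ\mu$ is the super-perioditization of $H^*_{dR}(-)$; in particular its $HP^-$-component is $\oplus_{i\text{ odd}}H^i_{dR}(-)$. Under the Tate-twist convention used here (with $\bbQ(j)$ sent to $k$ in degree $-2j$), the degree-$1$ part of $H^*_{dR}(M(X)(j))$ is exactly $H^{2j+1}_{dR}(X)$. It follows that $HP^-(\Gamma):H^1_{dR}(C)\to \oplus_{i=0}^{d-1}H^{2i+1}_{dR}(X)$ decomposes, one component per $j\in\{0,\dots,d-1\}$, as the degree-$1$ piece of $H^*_{dR}(\gamma_j)$, which is precisely one of the maps recorded in the definition of $NH^{2j+1}_{dR}(X)$.

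Finally, since each entry $\gamma_j$ of the family can be varied independently (by taking families with a single non-zero component, which remain legitimate morphisms in $\NChow(k)_\bbQ$), summing over all $(C,\Gamma)$ yields
\[
HP^-_{\mathrm{curv}}(\perf(X))\;=\;\bigoplus_{j=0}^{d-1}\Big(\sum_{C,\gamma_j}\mathrm{Im}\bigl(H^1_{dR}(C)\to H^{2j+1}_{dR}(X)\bigr)\Big)\;=\;\bigoplus_{j=0}^{d-1}NH^{2j+1}_{dR}(X),
\]
as subspaces of $HP^-(\perf(X))=\oplus_{i=0}^{d-1}H^{2i+1}_{dR}(X)$. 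Naturality in $X$ is automatic from the functoriality of all constructions. The only real bookkeeping point, which I expect to be the main (though minor) obstacle, is verifying that the Tate-twist shift lines up so that the degree-$1$ component of $H^*_{dR}(\gamma_j)$ lands precisely in $H^{2j+1}_{dR}(X)$, matching the indexing conventions of $NH^{2i+1}_{dR}(X)$ from the introduction.
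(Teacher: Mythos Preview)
Your proposal is correct and follows essentially the same route as the paper: both use the full faithfulness of $R$ to rewrite $\Hom_{\NChow(k)_\bbQ}(\perf(C),\perf(X))$ as a direct sum of Chow-hom groups indexed by Tate twists, invoke the identification of $HP^-\circ R\circ\mu$ with $\oplus_{i\,\mathrm{odd}}H^i_{dR}(-)$, and then split the image over the twist index. The only minor point the paper makes explicit that you leave implicit is the dimensional reason why the components $\gamma_j$ with $j\notin\{0,\dots,d-1\}$ contribute nothing (they note that $H^1_{dR}(\gamma_{-1})$ and $H^1_{dR}(\gamma_d)$ vanish), but this is subsumed in your observation that the target of $HP^-(\Gamma)$ is $\oplus_{i=0}^{d-1}H^{2i+1}_{dR}(X)$.
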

\begin{proof}
As explained in the proof of item (ii) of Theorem~\ref{thm:main}, the composition 
$$ \Chow(k)_\bbQ \stackrel{\mu}{\too} \Chow(k)_\bbQ/_{\!\!-\otimes \bbQ(1)} \stackrel{R}{\too} \NChow(k)_\bbQ \stackrel{HP^-}{\too} \mathrm{Vect}(k)$$
agrees with the functor $\oplus_{i\,\mathrm{odd}} H^i_{dR}(-)$. The commutativity of diagram \eqref{eq:big}, combined with the fact that $R$ is fully faithful and with the fact that the odd de Rham cohomology of a curve is supported in degree one, implies that the $k$-vector space $HP^-_{\mathrm{curv}}(\perf(X))$ identifies with 
\begin{equation}\label{eq:ident1}
\sum_{C,\gamma} \mathrm{Im} \big(H^1_{dR}(C) \stackrel{H^1_{dR}(\gamma)}{\too} \oplus_{i \,\mathrm{odd}} H_{dR}^i(X)\big)\,,
\end{equation}
where $\gamma$ is an element of 
$$ \Hom_{\Chow(k)_\bbQ/_{\!\!-\otimes\bbQ(1)}}(M(C),M(X)) = \oplus_{i=-1}^d\Hom_{\Chow(k)_\bbQ}(M(C),M(X)(i))\,.$$ 
Note that due to dimensional reasons the morphisms $H^1_{dR}(\gamma_{-1})$ and $H^1_{dR}(\gamma_d)$ are zero. Hence, since $\oplus_{i\,\mathrm{odd}}H^i_{dR}(-)=\oplus_{i=0}^{d-1}H_{dR}^{2i+1}(-)$, the above sum \eqref{eq:ident1} identifies furthermore with 
\begin{equation}\label{eq:ident2}
\sum_{C,\{\gamma_i\}_{i=0}^{d-1}} \mathrm{Im}\big(H^1_{dR}(C) \stackrel{\oplus_{i=0}^{d-1} H^1_{dR}(\gamma_i)}{\too} \oplus_{i=0}^{d-1} H_{dR}^{2i+1}(X)\big)
\end{equation}
where $\gamma_i : M(C) \to M(X)(i)$ is a morphism in $\Chow(k)_\bbQ$. Clearly this latter sum identifies with
\begin{equation}\label{eq:ident3}
\oplus_{i=0}^{d-1}\big(\sum_{C,\gamma_i} \mathrm{Im}(H^1_{dR}(C) \stackrel{H^1_{dR}(\gamma_i)}{\too} H^{2i+1}_{dR}(X)) \big)
\end{equation}
and so by combining \eqref{eq:ident1}-\eqref{eq:ident3} one obtains the natural isomorphism \eqref{eq:can-iso}. 
\end{proof}
\subsection*{Betti cohomology}
In this subsection one assumes that $k$ is a subfield of $\bbC$. Let $X$ be a smooth projective $k$-scheme of dimension $d$. Similarly to de Rham cohomology, one introduces the $\bbQ$-vector spaces
\begin{eqnarray*}
NH_B^{2i+1}(X) := \sum_{C,\gamma_i} \mathrm{Im}(H^1_B(C) \stackrel{H^1_B(\gamma_i)}{\too} H_B^{2i+1}(X)) && 0 \leq i \leq d-1\,,
\end{eqnarray*}
where $C \in \Curv(k)$ and $\gamma_i \in \Hom_{\Chow(k)_\bbQ}(M(C),M(X)(i))$.
\begin{lemma}\label{lem:Groth}
There are natural isomorphisms of $\bbC$-vector spaces
\begin{eqnarray*}
NH_{dR}^{2i+1}(X) \otimes_k \bbC \simeq NH_B^{2i+1}(X) \otimes_\bbQ \bbC && 0 \leq i \leq d-1\,.
\end{eqnarray*}
\end{lemma}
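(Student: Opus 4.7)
The plan is to derive Lemma~\ref{lem:Groth} from Grothendieck's period comparison isomorphism
$$H^\ast_{dR}(Y) \otimes_k \bbC \isotoo H^\ast_B(Y) \otimes_\bbQ \bbC\,,$$
which is natural in $Y \in \SmProj(k)$, together with its naturality with respect to Chow correspondences. Note that the two sums defining $NH_{dR}^{2i+1}(X)$ and $NH_B^{2i+1}(X)$ are indexed by exactly the same set of pairs $(C,\gamma_i)$, so it will suffice to match the corresponding images term by term after extending scalars to $\bbC$.

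First I would recall that both composed functors $H^\ast_{dR}(-) \otimes_k \bbC$ and $H^\ast_B(-) \otimes_\bbQ \bbC$ factor through $\Chow(k)_\bbQ$ (see \cite[Prop.~4.2.5.1]{Andre}) and that the period comparison upgrades to a natural isomorphism of $\bbQ$-linear $\otimes$-functors between these two factorizations. This is a standard consequence of the fact that both are Weil cohomology theories in which the algebraic cycle class maps are compatibly identified under the comparison; concretely, the action of a Chow correspondence is built out of pullback, pushforward along projections, and cup product with a cycle class, and the period isomorphism intertwines each of these operations. In particular, for every $\gamma_i \in \Hom_{\Chow(k)_\bbQ}(M(C),M(X)(i))$, the maps $H^1_{dR}(\gamma_i) \otimes_k \bbC$ and $H^1_B(\gamma_i) \otimes_\bbQ \bbC$ are intertwined by the period isomorphisms on source and target.

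Second, since $\bbC$ is flat over both $k$ and $\bbQ$, extension of scalars to $\bbC$ commutes with the formation of images and of sums of subspaces. Hence
$$NH_{dR}^{2i+1}(X) \otimes_k \bbC \;=\; \sum_{C,\gamma_i} \mathrm{Im}\bigl(H^1_{dR}(\gamma_i)\otimes_k \bbC\bigr)$$
and analogously for Betti. Combining this with the previous step identifies the two sums, as subspaces, under the period isomorphism $H^{2i+1}_{dR}(X) \otimes_k \bbC \simeq H^{2i+1}_B(X) \otimes_\bbQ \bbC$, yielding the desired natural isomorphism. The only point that requires care is the compatibility of the period comparison with the action of Chow correspondences, but this is classical once one reduces it to compatibility with pullback, pushforward, and cycle classes; no serious obstacle is expected.
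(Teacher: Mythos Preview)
Your proposal is correct and follows essentially the same approach as the paper: both argue that extension of scalars to $\bbC$ commutes with forming the sums of images (using flatness), and then invoke the naturality of Grothendieck's comparison isomorphism $H^\ast_{dR}(Y)\otimes_k \bbC \simeq H^\ast_B(Y)\otimes_\bbQ \bbC$ with respect to correspondences to match the two sums term by term. The paper's proof is simply a more condensed version of what you wrote, citing \cite{Grothendieck} for the comparison isomorphism without spelling out the compatibility with pullback, pushforward, and cycle classes.
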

\begin{proof}
Note that 
\begin{eqnarray*}
NH_{dR}^{2i+1}(X) \otimes_k \bbC & \simeq&  \sum_{C,\gamma_i} \mathrm{Im}(H^1_{dR}(C)_\bbC \stackrel{H^1_{dR}(\gamma_i)_\bbC}{\to} H_{dR}^{2i+1}(X)_\bbC ) \\
NH_{B}^{2i+1}(X) \otimes_\bbQ \bbC & \simeq& \sum_{C,\gamma_i} \mathrm{Im}(H^1_{B}(C)_\bbC \stackrel{H^1_{B}(\gamma_i)_\bbC}{\to} H_{B}^{2i+1}(X)_\bbC)\,,
\end{eqnarray*}
where $C \in \Curv(k)$ and $\gamma_i \in \Hom_{\Chow(k)_\bbQ}(M(C),M(X)(i))$. The proof follows now from the naturality of the comparison isomorphism
\begin{eqnarray}\label{eq:comparison}
H^\ast_{dR}(Y)\otimes_k \bbC \stackrel{\simeq}{\too} H^\ast_B(Y) \otimes_\bbQ \bbC && Y \in \SmProj(k)
\end{eqnarray}
established by Grothendieck in \cite{Grothendieck}.
\end{proof}
Similarly to de Rham cohomology, one has also intersection bilinear pairings
\begin{eqnarray}\label{eq:pairings-B}
\langle-,-\rangle: NH_B^{2d-2i-1}(X) \times NH_B^{2i+1}(X) \too \bbQ && 0 \leq i \leq d-1\,.
\end{eqnarray}
\begin{proposition}\label{prop:agree}
The pairings \eqref{eq:pairings1} are non-degenerate if and only if the pairings \eqref{eq:pairings-B} are non-degenerate.
\end{proposition}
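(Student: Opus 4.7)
The plan is to reduce both non-degeneracy statements to a single assertion about a bilinear pairing on $\bbC$-vector spaces by means of faithfully flat base change and Lemma~\ref{lem:Groth}.

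First I would recall the standard fact that a bilinear pairing $V \times W \to F$ of finite-dimensional vector spaces over a field $F$ is non-degenerate if and only if its extension $V \otimes_F F' \times W \otimes_F F' \to F'$ along any field extension $F \hookrightarrow F'$ is non-degenerate. Applying this to $k \hookrightarrow \bbC$ and $\bbQ \hookrightarrow \bbC$ respectively, the non-degeneracy of \eqref{eq:pairings1} is equivalent to the non-degeneracy of its scalar extension to $\bbC$, and likewise for \eqref{eq:pairings-B}. It therefore suffices to show that these two $\bbC$-linear extensions are identified, via Lemma~\ref{lem:Groth}, with one and the same pairing.

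Next I would invoke the fact that Grothendieck's comparison isomorphism \eqref{eq:comparison} is compatible with the cup product and with the trace map on a smooth projective $k$-scheme; consequently the intersection pairings on the full de Rham and Betti cohomologies of $X$ correspond to each other after extending scalars to $\bbC$. The pairings \eqref{eq:pairings1} and \eqref{eq:pairings-B} are by construction obtained by restriction of these intersection pairings to the subspaces $NH^{2i+1}_{dR}(X)$ and $NH^{2i+1}_B(X)$ respectively. Since the comparison isomorphism is natural with respect to morphisms induced by correspondences $\gamma_i \in \Hom_{\Chow(k)_\bbQ}(M(C),M(X)(i))$, it sends $NH^{2i+1}_{dR}(X)\otimes_k \bbC$ isomorphically onto $NH^{2i+1}_B(X)\otimes_\bbQ \bbC$; this is exactly the content of the proof of Lemma~\ref{lem:Groth}. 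Hence the restricted pairings correspond, i.e.\ the diagram
\begin{equation*}
\xymatrix@C=1em{
NH_{dR}^{2d-2i-1}(X)_\bbC \times NH_{dR}^{2i+1}(X)_\bbC \ar[d]_-{\simeq} \ar[r] & \bbC \ar@{=}[d] \\
NH_B^{2d-2i-1}(X)_\bbC \times NH_B^{2i+1}(X)_\bbC \ar[r] & \bbC
}
\end{equation*}
commutes, where the vertical isomorphism is the one furnished by Lemma~\ref{lem:Groth}.

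Combining the two steps, the $\bbC$-extension of \eqref{eq:pairings1} is non-degenerate if and only if the $\bbC$-extension of \eqref{eq:pairings-B} is non-degenerate, and by the base-change principle this yields the desired equivalence. The main obstacle is the compatibility of the comparison isomorphism with the intersection pairings (cup product and trace), but this is a well-known property of Grothendieck's isomorphism and amounts to checking compatibility on the level of the ambient cohomologies; the restriction to the ``curve-generated'' subspaces is then automatic by naturality.
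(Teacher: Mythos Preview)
Your proposal is correct and follows essentially the same route as the paper: compatibility of Grothendieck's comparison isomorphism with the intersection pairings yields the commutative square identifying the two $\bbC$-extended pairings, and then non-degeneracy is transferred via base change along $k\hookrightarrow\bbC$ and $\bbQ\hookrightarrow\bbC$. The only difference is cosmetic: the paper records the base-change principle as a separate lemma (Lemma~\ref{lem:general}) rather than citing it as a standard fact.
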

\begin{proof}
Grothendieck's comparison isomorphism \eqref{eq:comparison} is compatible with the intersection pairings and so one obtains the following commutative diagram:
$$
\xymatrix{
NH_B^{2d-2i-1}(X)_\bbC \times NH_B^{2i+1}(X)_\bbC \ar[rr]^-{\langle-,-\rangle} && \bbC \\
NH_{dR}^{2d-2i-1}(X)_\bbC \times NH_{dR}^{2i+1}(X)_\bbC \ar[rr]_-{\langle -,-\rangle_\bbC} \ar[u]_-\simeq^-{\eqref{eq:comparison}} && \bbC \ar[u]^-\simeq_-{\eqref{eq:comparison}}\,.
}
$$
The proof now follows from the general Lemma~\ref{lem:general} below applied to the intersection pairings on de Rham cohomology ($k \subset \bbC$) and Betti cohomology ($\bbQ\subset\bbC$).
\end{proof}
\begin{lemma}\label{lem:general}
Let $F$ be a field, $V$ and $W$ two finite dimensional $F$-vector spaces, $\langle -,-\rangle :V \times W \to F$ a bilinear pairing, and $F \subseteq L$ a field extension. Then, the pairing $\langle-,-\rangle$ is non-degenerate if and only if the pairing $\langle-,-\rangle_L:V_L \times W_L \to L$ is non-degenerate.
\end{lemma}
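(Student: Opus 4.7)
The plan is to reduce the lemma to the elementary fact that the rank of a matrix is invariant under field extensions. Fix $F$-bases $v_1,\dots,v_m$ of $V$ and $w_1,\dots,w_n$ of $W$, and form the Gram matrix $M=(\langle v_i,w_j\rangle)\in M_{m\times n}(F)$. The vectors $v_i\otimes 1$ and $w_j\otimes 1$ form $L$-bases of $V_L$ and $W_L$, and with respect to these bases the extended pairing $\langle-,-\rangle_L$ is represented by the same matrix $M$, now regarded as an element of $M_{m\times n}(L)$.

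Unwinding the definitions, non-degeneracy of $\langle-,-\rangle$ is equivalent to the left and right kernels of $M$ being trivial as $F$-subspaces of $F^{m}$ and $F^{n}$ respectively, and similarly non-degeneracy of $\langle-,-\rangle_L$ corresponds to the left and right kernels of $M$ over $L$ being trivial. Since $L$ is flat over $F$, tensoring with $L$ commutes with kernels, so $\ker_L(M)=\ker_F(M)\otimes_F L$ on each side; in particular $\mathrm{rank}_F(M)=\mathrm{rank}_L(M)$. Consequently the left (resp.\ right) kernel of $M$ vanishes over $F$ if and only if it vanishes over $L$, which yields the desired equivalence in both directions.

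There is no substantial obstacle; the only point to keep track of is that if either pairing is non-degenerate then necessarily $m=n$, forced by the simultaneous vanishing of both kernel dimensions $m-\mathrm{rank}(M)$ and $n-\mathrm{rank}(M)$. This observation is automatic from the flatness argument above and requires no extra input.
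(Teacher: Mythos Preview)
Your proof is correct. Both you and the paper begin by fixing bases of $V$ and $W$ and passing to the Gram matrix $M=(\langle v_i,w_j\rangle)$, observing that the same matrix represents the extended pairing over $L$. From there the arguments diverge: you characterize non-degeneracy by the vanishing of the left and right kernels of $M$ and invoke flatness of $L$ over $F$ (equivalently, invariance of rank under field extension) to conclude. The paper instead asserts that non-degeneracy amounts to the condition that every row and every column of $M$ contain a nonzero entry, and then notes that this condition is visibly preserved by field extension. Your kernel/rank argument is the safer formulation: the row/column condition written in the paper is in fact strictly weaker than non-degeneracy (for instance the $2\times 2$ all-ones matrix satisfies it while representing a degenerate pairing), so although the lemma is of course true, your argument is the one that actually establishes it.
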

\begin{proof}
Choose a basis $\{v_i\}_{i=1}^n$ of $V$ and a basis $\{w_j\}_{j=1}^m$ of $W$. Under such choices, the non-degeneracy of $\langle-,-\rangle$ reduces to the following two conditions:
\begin{itemize}
\item[(i)] for every $i \in \{1,\ldots, n\}$ there exists a $i'\in \{1, \ldots, m\}$ such that $\langle v_i,w_{i'}\rangle \neq 0$;
\item[(ii)]  for every $j \in \{1,\ldots, m\}$ there exists a $j'\in \{1, \ldots, n\}$ such that $\langle v_{j'},w_j\rangle \neq 0$.
\end{itemize}
Hence, the proof follows from the fact that  $\{v_i \otimes_F L\}_{i=1}^n$ (resp. $\{w_j \otimes_F L\}_{j=1}^m$) is a basis of $V_L$ (resp. of $W_L$) and that
$\langle v_i \otimes_F L, w_j \otimes_F L \rangle =\langle v_i, w_j \rangle$ for every $i \in \{1, \ldots, n\}$ and $j \in \{1, \ldots, m\}$.
\end{proof}

\begin{remark}\label{rk:explanation}
As proved by Vial in \cite[Lemma~2.1]{Vial}, the above pairings \eqref{eq:pairings-B} with $i=0$ and $i=d-1$ are always non-degenerate. Moreover, the non-degeneracy of the remain cases follows from Grothendieck's standard conjecture of Lefschetz type. Hence, the pairings \eqref{eq:pairings-B} are non-degenerate for curves, surfaces, abelian varieties, complete intersections, uniruled threefolds, rationally connected fourfolds, and for any smooth hypersurface section, product, or finite quotient thereof. Making use of Proposition \ref{prop:agree} one then observes that the same holds for the pairings \eqref{eq:pairings1}.
\end{remark}
\section{Proof of theorem~\ref{thm:main2}}
Similarly to de Rham cohomology, Betti cohomology also gives rise to a well-defined $\otimes$-functor $H_B^\ast: \Homo(k)_\bbQ \to \mathrm{GrVect}(\bbQ)$. As proved in Proposition \ref{prop:agree}, the intersection pairings \eqref{eq:pairings1} are non-degenerate if and only if the intersection pairings \eqref{eq:pairings-B} on Betti cohomology are non-degenerate. Hence, following \cite[Thm.~2]{Vial}, there exists mutually orthogonal idempotents $\Pi_{2i+1} \in \End_{\Homo(k)_\bbQ}(M(X))$, $0 \leq i \leq d-1$, with the following two properties:
\begin{eqnarray*}
\Pi_{2i+1}M(X) \simeq \underline{\pi}^1M(J_i^a(X))(-i) &\mathrm{and}& H^\ast_B(\underline{\pi}^1M(J^a_i(X))(-i))\simeq NH_B^{2i+1}(X)\,. 
\end{eqnarray*}
As a consequence, the direct sum 
\begin{equation}\label{eq:direct-sum}
\oplus_{i=0}^{d-1} \Pi_{2i+1}M(X) \simeq \oplus_{i=0}^{d-1} \underline{\pi}^1 M(J_i^a(X))(-i)
\end{equation}
is the direct factor of $M(X)$ associated to the idempotent $\sum_{i=0}^{d-1}\Pi_{2i+1}$. Now, recall from Theorem~\ref{thm:bridge} that one has the following commutative diagram:
\begin{equation}\label{eq:dig-11}
\xymatrix{
\Homo(k)_\bbQ\ar[r]^-\mu \ar[d] & \Homo(k)_\bbQ/_{\!\!-\otimes \bbQ(1)} \ar[d] \ar[r]^-{R_H} & \NHomo(k)_\bbQ \ar[d] \\
\Num(k)_\bbQ \ar[r]^-\mu & \Num(k)_\bbQ/_{\!\!-\otimes \bbQ(1)} \ar[r]^-{R_\cN} & \NNum(k)_\bbQ\,.
}
\end{equation}
By combining the universal property of the orbit category with the commutativity of diagram \eqref{eq:big}, one then observes that the image of \eqref{eq:direct-sum} (in $\NNum(k)_\bbQ$) under the composed functor \eqref{eq:dig-11} identifies with $\oplus_{i=0}^{d-1} \underline{\pi}^1 \perf(J^a_i(X))$. As explained in \cite[Prop.~4.3.4.1]{Andre} one has an equivalence
\begin{equation}\label{eq:equivalence-new}
\{\underline{\pi}^1 M(C)\,|\, C \in \Curv(k)\}^\natural \simeq \{\underline{\pi}^1M(X)\,|\,X \in \SmProj(k)\}
\end{equation}
of subcategories of $\Homo(k)_\bbQ$. Therefore, since $\overline{\Ab}(k)_\bbQ$ is closed under direct sums and $\underline{\pi}^1\perf(J^a_i(X))$ identifies with the image of $\underline{\pi}^1 M(J^a_i(X))$ along the composed functor \eqref{eq:dig-11}, one concludes that $\oplus_{i=0}^{d-1}\underline{\pi}^1\perf(J_i^a(X))$ belongs to $\overline{\Ab}(k)_\bbQ$. Now, recall from the proof of Theorem~\ref{thm:main} that besides this direct factor of $\perf(X)$ one has also the noncommutative numerical motive $\tau(\perf(X)) \in \overline{\Ab}(k)_\bbQ$. By definition of $\tau(\perf(X))$ one observes that there exists a split surjective morphism 
\begin{equation}\label{eq:split}
\tau(\perf(X)) \twoheadrightarrow \oplus^{d-1}_{i=0} \underline{\pi}^1\perf(J^a_i(X))
\end{equation}
in $\overline{\Ab}(k)_\bbQ$. Let us now prove that \eqref{eq:split} is an isomorphism. Consider the following composition with values in the category of finite dimensional super $\bbC$-vector spaces
\begin{equation}\label{eq:composed}
\overline{\Ab}(k)_\bbQ \subset \NHomo(k)_\bbQ \stackrel{HP^\pm}{\too} \mathrm{sVect}(k) \stackrel{-\otimes_k \bbC}{\too} \mathrm{sVect}(\bbC)\,.
\end{equation}
Since $HP^\pm$ and $-\otimes_k\bbC$ are faithful, the composition \eqref{eq:composed} is also faithful. Therefore, since the above surjective morphism \eqref{eq:split} admits a section, it suffices to prove the following inequality
\begin{equation}\label{eq:inequality}
\mathrm{dim}(HP^\pm(\tau(\perf(X)))\otimes_k \bbC) \leq \mathrm{dim}(HP^\pm(\oplus_{i=0}^{d-1} \underline{\pi}^1\perf(J^a_i(X)))\otimes_k \bbC) \,.
\end{equation}
On one hand one has:
\begin{eqnarray}
HP^\pm(\tau(\perf(X)))\otimes_k \bbC & = & HP^-(\tau(\perf(X))) \otimes_k \bbC \label{is1} \\
& \subseteq & HP^-_{\mathrm{curv}}(\perf(X)) \otimes_k \bbC \label{is2} \\
& \simeq & \oplus_{i=0}^{d-1} NH_{dR}^{2i+1}(X) \otimes_k \bbC \label{is3}  \\
& \simeq & \oplus_{i=0}^{d-1} NH_B^{2i+1}(X) \otimes_\bbQ \bbC \label{is4}\,.
\end{eqnarray}
Equality \eqref{is1} follows from the fact that $\tau(\perf(X)) \in \overline{\Ab}(k)_\bbQ$ and from the description \eqref{eq:description} of the composition \eqref{eq:comp} as the super-perioditization of de Rham cohomology. As explained in the proof of Theorem~\ref{thm:main}, we have the equality $H^1_{dR}({\bf J}(\perf(X))=HP^-(\tau(\perf(X)))$. Hence, inclusion \eqref{is2} follows from item (ii) of Theorem~\ref{thm:main}. Finally, isomorphism \eqref{is3} is the content of Proposition~\eqref{prop:sum} and isomorphism \eqref{is4} follows from Lemma~\ref{lem:Groth}. On the other hand one has:
\begin{eqnarray}
HP^\pm(\oplus_{i=0}^{d-1} \underline{\pi}^1\perf(J_i^a(X))) \otimes_k \bbC & \simeq & \oplus_{i=0}^{d-1}HP^\pm (\underline{\pi}^1 \perf(J_i^a(X))) \otimes_k \bbC \nonumber \\
& \simeq & \oplus_{i=0}^{d-1}H^1_{dR}(M(J_i^a(X))(-i)) \otimes_k \bbC \label{isom:2} \\
& \simeq & \oplus_{i=0}^{d-1}H^1_{B}(M(J_i^a(X))(-i))  \otimes_\bbQ \bbC \label{isom:3}  \\
& \simeq & \oplus_{i=0}^{d-1} NH_B^{2i+1}(X) \otimes_\bbQ \bbC\,. \label{isom:4}
\end{eqnarray}
Isomorphism \eqref{isom:2} follows from the fact that the composed functor \eqref{eq:comp} agrees with the super-perioditization \eqref{eq:description} of de Rham cohomology and that $\underline{\pi}^1\perf(J^a_i(X))$ is the image of $\underline{\pi}^1 M(J^a_i(X))(-i)$ under the composed functor \eqref{eq:dig-11}. Isomorphism \eqref{isom:3} follows from \eqref{eq:comparison} and finally \eqref{isom:4} follows from the above isomorphism $H^\ast_B(\underline{\pi}^1M(J_i^a(X))(-i))\simeq NH_B^{2i+1}(X)$. These isomorphisms imply the above inequality \eqref{eq:inequality} and consequently that \eqref{eq:split} is an isomorphism.

Now, by combining equivalences \eqref{eq:equivalence} and \eqref{eq:equivalence-new}, one observes that the passage from a smooth projective $k$-scheme to its Picard (abelian) variety  gives also rise to an equivalence of categories
\begin{eqnarray}\label{eq:new-equivalence}
& \Num(k)_\bbQ \supset \{\underline{\pi}^1M(X)\,|\, X \in \SmProj(k)\} \stackrel{\sim}{\too} \Ab(k)_\bbQ & X \mapsto \mathrm{Pic}^0(X)\,.
\end{eqnarray}
The direct sum $\oplus_{i=0}^{d-1}\underline{\pi}^1M(J_i^a(X))$ is mapped to $\oplus_{i=0}^{d-1}\underline{\pi}^1\perf(J_i^a(X))$ under the composed functor \eqref{eq:dig-11}. Hence, by the construction of the Jacobian functor one concludes that ${\bf J}(\perf(X))$ identifies with the image of $\oplus_{i=0}^{d-1}\underline{\pi}^1M(J_i^a(X))$ under the above equivalence \eqref{eq:new-equivalence}. The desired isomorphism \eqref{eq:isom-last} follows now from the natural isomorphism 
$$\oplus_{i=0}^{d-1} \underline{\pi}^1 \perf(J^a_i(X)) \simeq \underline{\pi}^1 M(\cup_{i=0}^{d-1} J_i^a(X))$$ and from the fact that the algebraic variety $J_i^a(X)$ agree with their own Picard variety. Finally, the above arguments show us that
\begin{equation}\label{eq:iso-last}
HP^-(\tau(\perf(X)))\otimes_k \bbC \simeq \oplus_{i=0}^{d-1} NH_{dR}^{2i+1}(X) \otimes_k \bbC\,.
\end{equation}
Hence, by combing equality \eqref{eq:equality-new} (with $N=\perf(X)$) with \eqref{eq:iso-last} one obtains the isomorphism $H^1_{dR}({\bf J}(\perf(X))) \simeq \oplus_{i=0}^{d-1} NH_{dR}^{2i+1}(X) \otimes_k \bbC$.

\end{document}